\newtheorem{maintheorem}{Theorem}
\renewcommand{\d}{\displaystyle}
\renewcommand{\r}{\Rightarrow}
\renewcommand{\b}{\textbf}
\newcommand{\Div}{\operatorname{div}}
\newcommand{\graf}{\operatorname{Graph}}
\newcommand{\Per}{\operatorname{Per}}
\newcommand{\sr}{\mathbb{R}}
\newcommand{\grad}{\operatorname{grad}}
\newcommand{\id}{\operatorname{I}}
\newcommand{\idd}{\operatorname{id}}
\newcommand{\diff}{\operatorname{Diff}}
\newcommand{\tpitchfork}{%
  \vbox{
    \baselineskip\z@skip
    \lineskip-.52ex
    \lineskiplimit\maxdimen
    \m@th
    \ialign{##\crcr\hidewidth\smash{$-$}\hidewidth\crcr$\pitchfork$\crcr}
  }%
}
\newtheorem{teo}{Theorem}[section]
\newtheorem{cor}{Corollary}[section]
\newtheorem{lema}{Lemma}[section]
\newtheorem{prop}{Proposition}[section]
\theoremstyle{definition}
\begin{document}

\title{On the integrability of codimension $1$ invariant subbundles of partially hyperbolic skew-products}
\author{Vanderlei Horita \and Ricardo Chical\'{e} Lemes}
\date{}
\maketitle

\let\thefootnote\relax\footnote{2000 {Mathematics Subject Classification}.
Primary 37D20, 37J05, 37J55.}
\let\thefootnote\relax\footnote{{Key words and phrases}.
Partially hyperbolic skew-products, invariant bundles, integrability, coboundaries, contact diffeomorphisms.}
\let\thefootnote\relax\footnote{Work partially supported by CAPES, CNPq, FAPESP, and PRONEX.}

\begin{abstract}
We prove there is a class of maps $\gamma:\mathbb{T}^{2n}\rightarrow\mathbb{S}^1$ such that a conservative dynamically coherent partially hyperbolic skew-product on $\mathbb{T}^{2n}\times\mathbb{S}^1$ with fixed hyperbolic dynamics on the base and rotation by angle $\gamma$ acting on the fibers have integrable hyperbolic structure which also implies in particular that they are not contact diffeomorphisms. In dimension $3$, we prove the same result using a standard technique in Contact Geometry, namely, that of \emph{characteristic foliations}, which gives a simple proof of the result but with more tight restrcitions to the map $\gamma$.  	
	
\end{abstract}

\section{Introduction}

The goal of this work is to contribute in the study of partially hyperbolic contact diffeomorphisms. Most of the works related to contact dynamics is about \emph{Anosov contact flows}, which are Anosov flows defined in contact manifolds such that its hyperbolic structure $E^s\oplus E^u$ is the contact structure of the manifold~\cite{alv14,alv15,sto15,sto17}.

In general, when the invariant center bundle of a partially hyperbolic diffeomorphism has dimension $1$, the bundles $E^s$ and $E^u$ are not jointly integrable, that is, $E^s\oplus E^u$ is not integrable. However, nonitegrability of $E^s\oplus E^u$ does not mean that it qualifies to be a contact structure. Indeed, a contact struture is defined to be a codimension $1$ subbundle of an odd-dimensional manifold that is \emph{as far as possible from being integrable} or \emph{maximally nonintegrable}. The geometric meaning of \emph{maximal nonintegrability} is that there are no hypersurfaces tangent to the given subbundle, not even locally, whereas an ordinary nonintegrable subbundle may have some form of local integrability. This suggests that we can establish different types of nonintegrability.

A way to establish the different types of nonintegrability is through the \emph{Frobenius' Integrability Theorem}. For codimension $1$ subbundles, the Frobenius' Integrability Theorem states that such subbundles are integrable if and only if $\alpha\wedge d\alpha = 0$, where $\alpha$ is local defining $1$-form for the subbundle.
This theorem implies that one can achieve nonitegrability by providing a subbundle such that $(\alpha\wedge d\alpha)(p)\neq 0$, for some $p\in M$. On the other hand, the contact condition for $\xi$ requires that $\alpha\wedge(d\alpha)^n \neq 0$ at every point in $M$, that is, $\alpha\wedge (d\alpha)^n$ is required to be a volume form in $M$. When $\dim(M) =3$ it becomes clear what ``\emph{as far as possible from being integrable}'' means: $\xi$ is a contact structure if $\alpha\wedge d\alpha \neq 0$ at every point in $M$, which is the complete opposite of it being (Frobenius) integrable.


A \emph{partially hyperbolic contact $C^r$ diffeomorphism} $f:M\rightarrow M$ as we consider here is an element of the set of partially hyperbolic $C^r$ diffeomorphisms $\mathcal{PH}^r(M)$ that preserves a contact form up to multiplication by strictly positive $C^\infty$ functions. Denote by $\diff^r_\alpha(M)$ the set of diffeomorphisms of $M$ such that $f^\ast \alpha  = \tau \alpha$, for some $C^\infty$ function $\tau:M\rightarrow (0,+\infty)$. If $\alpha$ is a contact structure, then $f$ is a partially hyperbolic contact $C^r$ diffeomorphism if $f\in \mathcal{PH}^r(M)\cap\diff^r_\alpha(M)$. This means that the codimension $1$ subbundle $\ker(\alpha)\subset TM$ and the corresponding Reeb vector field $R$ are $f$-invariant, which gives two distinct decompositions of $TM$ into $Df$-invariant subbundles: the usual one, $TM = E^s\oplus E^c \oplus E^u$, inherited from the partially hyperbolic structure and $TM = \ker(\alpha)\oplus \langle R\rangle$ inherited from the contact structure. It follows that either one of the joint subbundles $E^{cs} = E^s\oplus E^c$, $E^{cu} = E^c\oplus E^u$ or $E^{su} = E^s\oplus E^u$ are subbundles of the contact structure $\ker(\alpha)$. This poses strict dimensional restrictions to either $E^s$ or $E^u$, whenever they belong to $\ker(\alpha)$. Indeed, since these subbundles are integrable they must satisfy $$\dim(E^\delta) \leq \dfrac{1}{2} \dim(\ker(\alpha)),$$ for $\delta = s$ or $u$. (See Theorem~\ref{contsub}).

For dynamically coherent partially hyperbolic contact diffeomorphisms, the only possibility for the invariant contact structure is that it contains $E^s\oplus E^u$ and when the central bundle has dimension $1$ this is actually the unique choice, i.e., $E^s\oplus E^u$ is a contact structure for any dynamically coherent partially hyperbolic contact diffeomorphisms. Note that in this case, $E^s\oplus E^u$ is at least $C^1$, which shows that dynamically coherent partially hyperbolic contact diffeomorphisms does not occur frenquently since $E^s\oplus E^u$ is generally at most Hölder continuous.  This poses the question of whether such diffeomorphisms actually exist or not on manifolds that supports them. For instance, in the Heisenberg $3$-manifold, which is the quotient of the Heisenberg group by a discrete subgroup and can also be viewed as a $\mathbb{S}^1$-bundle of over $\mathbb{T}^2$, all bundle isomorphisms over Anosov automorphisms of $\mathbb{T}^2$ are contact diffeomorphisms with $E^s\oplus E^u$ being the invariant contact structures in Heisenberg manifold, see~\cite{shi14}. On the other hand we have yet to find an example of dynamically coherent partially hyperbolic contact diffeomorphism on $\mathbb{T}^{3}$. 

The existence of dynamically coherent partially hyperbolic contact diffeomorphisms in $\mathbb{T}^{2n+1}$ with one dimensional center bundle is an interesting matter since the contact structure must be $E^s\oplus E^u$. In particular, if there are any contact partially hyperbolic diffeomorphisms in $\mathbb{T}^{2n+1}$ with $C^\infty$ contact structure $E^s\oplus E^u$ then it must have the \emph{accessibility property} due to \emph{Chow's Theorem}, see \cite[Theorem 3.3]{pgshb93}.

In this work we consider mainly conservative partially hyperbolic skew-products $F:\mathbb{T}^{2n}\times\mathbb{S}^1\rightarrow \mathbb{T}^{2n}\times \mathbb{S}^1$ of the form
\begin{equation}
F(x,t) = (f(x),t+\gamma(x)),
\label{eq000}
\end{equation}
where $f$ is a symplectic Anosov $C^r$ diffeomorphism and $\gamma\in C^r(\mathbb{T}^{2n},\mathbb{S}^1)$. This choice stems from the fact that most partially hyperbolic diffeomorphisms with one dimensional center bundle lies in the isotopy class of $f\times \idd_{\mathbb{S}^1}$ are skew-products, where $f:\mathbb{T}^{2n}\rightarrow \mathbb{T}^{2n}$ is Anosov. 

	
		
		
		

	

We show that for a certain class of maps $\gamma:\mathbb{T}^{2n}\rightarrow\mathbb{S}^1$, namely \emph{coboundaries} (see Section 2 for precise definitions), the skew-product~\eqref{eq000} fails to be a contact diffeomorphism.

\begin{maintheorem}
	There are no partially hyperbolic contact $C^r$ skew-products $F:\mathbb{T}^{2n}\times \mathbb{S}^1\rightarrow \mathbb{T}^{2n}\times \mathbb{S}^1$ of the form $F(p,t) = (f(p),t+\gamma(p))$, where $f:\mathbb{T}^{2n}\rightarrow \mathbb{T}^{2n}$ is an Anosov symplectic $C^r$ diffeomorphism and $\gamma\in C^r(\mathbb{T}^{2n};\mathbb{S}^1)$ is a coboundary, for $r\geq 1$.
	\label{teoA}
\end{maintheorem}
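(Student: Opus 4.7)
The plan is to exploit the coboundary hypothesis on $\gamma$ to realize $F$ as smoothly conjugate to the trivial product $f\times \idd_{\mathbb{S}^1}$; then the integrability of $E^s\oplus E^u$ becomes transparent from the product structure, and the paper's characterization of the contact structure in the one-dimensional center setting delivers the contradiction.

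First, writing the coboundary as $\gamma = \varphi - \varphi\circ f$ for a $C^r$ transfer map $\varphi:\mathbb{T}^{2n}\to \mathbb{S}^1$ (provided by the definition of coboundary in Section 2), I define the $C^r$ diffeomorphism $H:\mathbb{T}^{2n}\times\mathbb{S}^1\to \mathbb{T}^{2n}\times\mathbb{S}^1$ by $H(x,t)=(x,\,t+\varphi(x))$. A one-line check, using the cocycle identity for $\varphi$, gives
\begin{equation*}
H\circ F(x,t) \;=\; \bigl(f(x),\, t+\gamma(x)+\varphi(f(x))\bigr) \;=\; \bigl(f(x),\, t+\varphi(x)\bigr) \;=\; (f\times \idd_{\mathbb{S}^1})\circ H(x,t),
\end{equation*}
so $F$ is $C^r$-conjugate to the product $\tilde F = f\times \idd_{\mathbb{S}^1}$. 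I would also observe, for completeness, that every skew-product of the stated form is automatically partially hyperbolic and dynamically coherent, with one-dimensional center $E^c=\mathbb{R}\,\partial_t$ integrated by the vertical circles, so the contact-structure analysis in the introduction applies to $F$.

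Next, for the product $\tilde F$ the invariant splitting is explicit: $E^\sigma_{\tilde F} = E^\sigma_f\oplus 0$ for $\sigma\in\{s,u\}$, whence $E^s_{\tilde F}\oplus E^u_{\tilde F} = T\mathbb{T}^{2n}\oplus 0$ is tangent to the horizontal tori $\mathbb{T}^{2n}\times\{t_0\}$ and is therefore Frobenius integrable. Transporting by the $C^r$ conjugacy $H$, the subbundle $E^s_F\oplus E^u_F = DH^{-1}\bigl(E^s_{\tilde F}\oplus E^u_{\tilde F}\bigr)$ is equally integrable, with leaves the smooth graph-hypersurfaces $\bigl\{(x,\,t_0-\varphi(x)) : x\in\mathbb{T}^{2n}\bigr\}$.

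Finally, suppose toward contradiction that $F$ were a partially hyperbolic contact $C^r$ diffeomorphism, with preserved contact form $\alpha$. Since $\dim E^c = 1$, the discussion in the introduction (formalized in Theorem~\ref{contsub}) forces $\ker(\alpha) = E^s_F\oplus E^u_F$. But a contact structure satisfies $\alpha\wedge(d\alpha)^n\neq 0$ everywhere and is therefore maximally non-integrable; in particular it cannot be Frobenius integrable, contradicting the integrability of $E^s_F\oplus E^u_F$ just established. The only genuinely nontrivial step is the construction of the conjugacy $H$ from the coboundary datum; once this is in place, the rest is a direct application of the paper's own framework, and I expect no real obstacle beyond bookkeeping on the regularity of $\varphi$.
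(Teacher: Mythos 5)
Your proof is correct, but it reaches the conclusion by a genuinely different route than the paper. The paper never constructs a conjugacy: it works at the level of invariant $1$-forms, showing (Lemmas~\ref{lema3121} and~\ref{lema21}, Proposition~\ref{teo4.1}) that a $DF$-invariant codimension-$1$ subbundle transversal to $T\mathbb{S}^1$ has a local defining form $\alpha = v_t\,dt+\beta_t$ whose normalized part satisfies a cohomological equation, and then uses the coboundary hypothesis to produce the $F$-invariant foliation by the graphs $A_\theta = \graf(\mu+\theta)$ to which the subbundle is tangent; combined with the assertion that $E^s\oplus E^u$ is the unique invariant codimension-$1$ subbundle transversal to $E^c = T\mathbb{S}^1$, this forces the candidate contact structure to be integrable. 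Your trivialization $H(x,t) = (x,t+\varphi(x))$ with $H\circ F = (f\times \idd_{\mathbb{S}^1})\circ H$ shortcuts all of this machinery and produces the \emph{same} foliation (your leaves $\{t = t_0-\varphi(x)\}$ are exactly the $A_\theta$, up to your sign convention $\gamma = \varphi-\varphi\circ f$ versus the paper's $\gamma = \mu\circ f - \mu$). It even has an advantage: the identification $E^s_F\oplus E^u_F = DH^{-1}(T\mathbb{T}^{2n}\oplus 0)$ follows from uniqueness of the partially hyperbolic splitting under a conjugacy whose derivative is uniformly bounded on a compact manifold, so you do not need the paper's unproved uniqueness claim for the invariant transversal subbundle. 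What the paper's heavier route buys is generality and reusability: Lemma~\ref{lema3121} and Proposition~\ref{propk} treat skew-products with arbitrary fiber maps $\phi_t(p)$ (where no conjugacy to a product is available), yield the full equivalence of Theorem~\ref{teoB} rather than only the direction needed here, and set up the $1$-form formalism reused in the proof of Theorem~\ref{teoC}; your conjugacy trick is special to rotation fibers, where the coboundary literally linearizes the fiber dynamics.

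One point of bookkeeping you should repair: the definition of coboundary in Section 2 only provides a transfer map of class $C^\ell$ with $\ell\leq r$, and the paper's own proof (via Theorem~\ref{teoB}) assumes $\mu\in C^1$, so you may not claim $\varphi\in C^r$, only $\varphi\in C^1$. This is harmless for your argument: $H$ is then a $C^1$ diffeomorphism and the leaves are $C^1$ graphs, and the final contradiction survives, since the graph would be a $2n$-dimensional isotropic submanifold while Theorem~\ref{contsub} bounds the dimension of isotropic submanifolds by $n$ --- but, as in the paper, the contradiction with the contact condition should be phrased as the existence of a $C^1$ integral hypersurface rather than smooth Frobenius integrability.
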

 
While this does not prove the existence of partially hyperbolic contact diffeomorphisms in $\mathbb{T}^{2n+1}$, it does exclude an important class of maps $\gamma$ from the ones that can produce such diffeomorphisms and are tightly related to our cadidate subbundle for contact structure to admit a maximal integral smooth manifold. This is the content of the following theorem, which is also the main tool in proving Theorem~\ref{teoA}.

\begin{teo}
	
	Let $M^n$ be a compact $n$-dimensional manifold and $F:M\times \mathbb{S}^1\rightarrow M\times \mathbb{S}^1$ be a partially hyperbolic $C^r$ skew-product of the form $F(p,t) = (f(x),t+\gamma(p))$, where $f$ is a Anosov  $C^r$ diffeomorphism  and $\gamma\in C^r(M;\mathbb{S}^1)$, $r\geq 1$. Then $\gamma$ is a coboundary relative to $f$ with transfer map $\mu\in C^1(M,\mathbb{S}^1)$ if and only if there exists an $F$-invariant codimension $1$ foliation $\mathcal{F}$ transversal to $\mathbb{S}^1$ such that each leaf is at least $C^1$ fixed by $F$. 
	\label{teoB}
\end{teo}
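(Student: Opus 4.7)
\textbf{($\Rightarrow$)} Assume $\gamma=\mu\circ f-\mu$ for some $\mu\in C^1(M,\mathbb{S}^1)$. Then the graphs
\[L_s:=\{(p,s+\mu(p)):p\in M\},\qquad s\in\mathbb{S}^1,\]
form a $C^1$ codimension-$1$ foliation $\mathcal{F}$ of $M\times\mathbb{S}^1$ whose leaves are transversal to $\{p\}\times\mathbb{S}^1$. The direct computation
\[F(p,s+\mu(p))=(f(p),s+\mu(p)+\gamma(p))=(f(p),s+\mu(f(p)))\]
shows that $F(L_s)=L_s$ for every $s$.

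\textbf{($\Leftarrow$)} Assume such an $\mathcal{F}$ exists, and lift everything to the universal cover $\tilde\pi\colon\tilde M\times\mathbb{R}\to M\times\mathbb{S}^1$. The pulled-back foliation $\tilde{\mathcal{F}}$ is transverse to the $\mathbb{R}$-fibres, and any lift of $F$ is again a skew-product $\tilde F(\tilde p,\tilde t)=(\tilde f(\tilde p),\tilde t+\tilde\gamma(\tilde p))$ for some lifts $\tilde f\colon\tilde M\to\tilde M$ and $\tilde\gamma\colon\tilde M\to\mathbb{R}$. Simply-connectedness of $\tilde M$ makes path-lifting within leaves of $\tilde{\mathcal{F}}$ globally well defined, so every leaf of $\tilde{\mathcal{F}}$ is the global graph of a $C^1$ function $\tilde M\to\mathbb{R}$. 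Pick one such leaf $\tilde L_0=\graf(\tilde\mu_0)$; because $\tilde\pi(\tilde L_0)$ is $F$-fixed, $\tilde F(\tilde L_0)$ lies over the same downstairs leaf, hence is a deck-translate of $\tilde L_0$, and by choosing the lifts of $f$ and $\gamma$ suitably we may arrange $\tilde F(\tilde L_0)=\tilde L_0$. Evaluating this identity on the graph then yields the cocycle equation on the cover
\[\tilde\mu_0\circ\tilde f-\tilde\mu_0=\tilde\gamma.\]

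The main obstacle is the descent: I must show $\tilde\mu_0$ is $\pi_1(M)$-equivariant with integer periods, so that $\mu:=[\tilde\mu_0]\colon M\to\mathbb{S}^1$ is a well-defined $C^1$ map. Geometrically, this is the assertion that the leaf $\tilde\pi(\tilde L_0)$ is a single-sheeted section of the bundle $M\times\mathbb{S}^1\to M$. My plan is to exploit the partially hyperbolic structure of $F$. First, a standard hyperbolic-telescoping argument using $DF$-invariance of the ``slope'' of $T\mathcal{F}$ over $E^s\oplus E^u$ valued in $E^c$ (iterated forward on $E^s$ and backward on $E^u$) forces this slope to vanish, so $T\mathcal{F}=E^s\oplus E^u$ and every leaf of $\mathcal{F}$ is saturated by the strong stable and unstable manifolds of $F$. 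Combining this saturation with Anosov transitivity and examining what happens over the dense set of $f$-periodic orbits, where $F^n$ acts on the fibre as translation by the Birkhoff sum $S_n\gamma(p)$, the $F$-fixedness of $L$ together with the $C^1$ regularity of $\mathcal{F}$ should force $|L\cap(\{p\}\times\mathbb{S}^1)|=1$. The map $\mu$ defined by $(p,\mu(p))\in L$ is then $C^1$ and satisfies $\mu\circ f-\mu=\gamma$, closing the argument; this sheet-count step, which rules out multi-sheeted leaves by combining partial hyperbolicity, Livsic-type rigidity, and transversality, is where I expect the main work of the proof to lie.
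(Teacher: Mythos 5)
Your forward implication coincides with the paper's own argument (the corollary following Proposition~\ref{teo4.1}): one foliates $M\times\mathbb{S}^1$ by the graphs $A_\theta=\graf(\mu+\theta)$ and checks leafwise fixedness by the same one-line computation $F(p,\theta+\mu(p))=(f(p),\theta+\mu(f(p)))$. That half is correct and needs no further comment, beyond the small point that ``simply-connectedness makes path-lifting within leaves well defined'' is not quite the right justification in your converse setup: that leaves project to $M$ as coverings is an Ehresmann-type completeness statement for foliations transverse to the fibres of a bundle with \emph{compact} fibre, not a consequence of $\pi_1(\tilde M)=0$ alone.

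The converse, however, is not a proof, and you say so yourself: the descent step --- showing $\tilde\mu_0$ is $\pi_1(M)$-equivariant with integer periods, equivalently that the fixed leaf is a single-sheeted graph --- is exactly the missing content, and your plan for it is speculative (``should force'', ``I expect''). Nothing in the telescoping argument identifying $T\mathcal{F}$ with $E^s\oplus E^u$, nor in $su$-saturation plus transitivity, is shown to rule out the genuinely problematic scenario: a leaf meeting a periodic fibre in $d>1$ points that the rotation by the Birkhoff sum permutes \emph{cyclically} rather than fixes, in which case $d\gamma$ would satisfy the periodic obstruction while $\gamma$ itself need not --- so the sheet count carries real content and cannot be waved through. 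Moreover, the paper shows the entire covering-space apparatus is avoidable. Its converse (Proposition~\ref{propk}, applied with $k=1$) works downstairs: for an $m$-periodic point $x$ of $f$, the leaf through $(x,t)$ is fixed by $F$ and $F^{m}$ preserves the fibre $\{x\}\times\mathbb{S}^1$, whence $\sum_{i=0}^{m-1}\gamma(f^i(x))=0$ in $\mathbb{S}^1$ at every periodic point; Liv\v{s}ic's theorem for $\mathbb{S}^1$-valued cocycles then produces the transfer map $\mu$ directly on $M$, and no equivariance question ever arises. (To be fair, the point-fixing assertion at periodic fibres is also where the paper is tersest --- it is the local shadow of your sheet-count problem, a rotation preserving the closed set $A_{j_0}\cap(\{x\}\times\mathbb{S}^1)$ --- but it is a far more tractable statement than global equivariance of $\tilde\mu_0$.) Finally, note that Liv\v{s}ic as invoked yields only a H\"older $\mu$; the $C^1$ regularity in the statement requires the Liv\v{s}ic regularity theory for $C^1$ cocycles over Anosov diffeomorphisms, so your route should not be expected to output $C^1$ for free either.
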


In the course of the proof of Theorem~\ref{teoB} we show that the skew-product admit an \emph{invariant graph}, which in turn extend to an invariant codimension $1$ foliation transversal to the fibers. An \emph{invariant graph} for a skew-product $F:M\times\mathbb{S}^1\rightarrow M\times\mathbb{S}^1$, given by 
\begin{equation}
F(x,t) = (f(x),g_x(t)),
\label{eq0000}
\end{equation} 
is a map $\mu:M\rightarrow \mathbb{S}^1$ such that 
\begin{equation}
\mu(f(x)) = g_x(\mu(x))
\label{eq0001}
\end{equation}

Hadjiloucas, Nicol and Walkden proved in~\cite{had01} that for skew-products~\eqref{eq0000}, with base Anosov dynamics and negative fiber Lyapunov exponent, a dichotomy occurs: either the invariant graph is nowhere differentiable or is as smooth as the dynamics, with the latter case generically ocurring. More recently, Walkden and Withers studied the case of a skew-product~\eqref{eq0000} defined on $M\times\mathbb{R}$ over expanding Markov maps and with the fiber Lyapunov exponents vanishing on a set of periodic orbits. In this case, he also obtained a dichotomy where either the invariant graph has the structure of a \emph{quasi-graph} or is smooth as the dynamics~\cite{walk18}.


In dimension $3$ we can use techniques from Contact Topology, namely \emph{characteristic foliations}, to prove Theorem~\ref{teoA} in particular case when the transfer map of the couboundary is $C^r$, with $r\geq 2$. 

The following result is the base of the proof.

\begin{teo}
	Let $M^{2}$ be a compact surface without boundary, $F:M\times \mathbb{S}^1\rightarrow M\times \mathbb{S}^1$ be a skew-product of the form $F(p,t) = (f(x),t+\gamma(p))$, where $f$ is a conservative Anosov  $C^r$ diffeomorphism  and $\gamma\in C^r(M;\mathbb{S}^1)$, $r\geq 1$. If $\gamma$ is a coboundary relative to $f$ with transfer map $\mu\in C^1(M;\mathbb{S}^1)$ then there exists an $DF$-invariant codimension $1$ subbundle $\xi\subset T(M\times\mathbb{S}^1)$  transversal to $T\mathbb{S}^1$ such that the vector field $X_\xi$ reprensenting the characteristic foliation of $\xi$ in any hypersurface $S\subset M\times \mathbb{S}^{1}$ transversal to $T\mathbb{S}^1$ is locally hamiltonian. 
	\label{teoC}
\end{teo}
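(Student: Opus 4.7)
The coboundary relation $\gamma = \mu\circ f - \mu$, with $\mu\in C^1(M;\mathbb{S}^1)$, suggests first defining $\phi\in C^1(M\times\mathbb{S}^1;\mathbb{S}^1)$ by $\phi(p,t) = t - \mu(p)$. One computes directly that $\phi\circ F = \phi$, while $d\phi(\partial_t)\equiv 1$ makes $\phi$ a submersion. I would then take $\xi := \ker d\phi$: $DF$-invariance follows by differentiating $\phi\circ F = \phi$, and $d\phi(\partial_t)\ne 0$ gives the required transversality to $T\mathbb{S}^1$. Equivalently, $\xi$ is the tangent distribution of the $F$-invariant foliation by graphs of $\mu + c$, $c\in\mathbb{S}^1$, already produced by Theorem~\ref{teoB}.

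Next, for a hypersurface $S\subset M\times\mathbb{S}^1$ transversal to $T\mathbb{S}^1$, I would locally write $S = \iota(U)$ where $\iota(p) = (p, h(p))$ for some $h\in C^1(U;\mathbb{S}^1)$ on an open $U\subset M$. Then $\iota^*\phi = h - \mu$, and $\iota_* v\in \xi$ iff $d(h-\mu)(v) = 0$, so the characteristic foliation on $\iota(U)$ corresponds via $\iota$ to the foliation of $U$ by level sets of the $\mathbb{S}^1$-valued map $h - \mu$. The key step is then to use the $f$-invariant area form $\omega_M$ on $M^2$ provided by conservativity of $f$: define $X_\xi$ on $U$ by the equation $\iota_{X_\xi}\omega_M = d(h-\mu)$, uniquely solvable by non-degeneracy of $\omega_M$. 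Antisymmetry of $\omega_M$ gives $d(h-\mu)(X_\xi)=\omega_M(X_\xi,X_\xi) = 0$, so $X_\xi$ represents the characteristic foliation. On any simply-connected $V\subset U$, the map $h-\mu$ admits a real-valued lift $H\in C^1(V;\sr)$, whence the defining relation becomes $\iota_{X_\xi}\omega_M = dH$ — the Hamiltonian equation on $(U,\omega_M)$ — showing that $X_\xi$ is locally Hamiltonian. Since contracting the $F$-invariant $3$-form $\omega_M\wedge dt$ with $\partial_t$ and restricting to $S$ recovers $\omega_M$ under the parametrization $\iota$, this symplectic/Hamiltonian structure is intrinsic to $S$.

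The main obstacle I anticipate is verifying that the construction is genuinely intrinsic to $S$: different local graph parametrizations must produce compatible vector fields so that $X_\xi$ assembles into a single representative of the characteristic foliation on $S$, and the patchwise Hamiltonians must globally witness the property of being \emph{locally Hamiltonian} in a coherent way. Zeros of $d(h-\mu)$ — which are precisely the singular points of the characteristic foliation, where $S$ becomes tangent to $\xi$ — fit naturally with $X_\xi$ vanishing there, but must be tracked carefully. Finally, the low regularity of $\mu$ (only $C^1$) forces ``locally Hamiltonian'' to be read in the weak sense compatible with $X_\xi$ being merely $C^0$ and its local Hamiltonians $C^1$, which is one of the reasons why the characteristic-foliation approach is eventually restricted in the paper to the case $r\ge 2$.
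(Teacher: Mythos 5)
Your proposal is correct and follows essentially the same route as the paper: the invariant $1$-form $d\phi = dt - d\mu$ is exactly the form $\widetilde{\alpha} = dt - d\widetilde{\mu}$ constructed in the proof of Theorem~\ref{teoB}, the local graph parametrization of $S$ and the defining equation $\iota_{X_\xi}\omega = i_S^\ast\alpha = d(h-\mu)$ reproduce Lemmas~\ref{lema21}, \ref{lema32} and~\ref{prop31}, and your Hamiltonian $H = h-\mu$ is the paper's. The only (cosmetic) difference is that you verify the Hamiltonian condition via the symplectic definition $\iota_{X_\xi}\omega = dH$ on a simply-connected patch, whereas the paper converts to its compatible-triple formulation $X_\xi = -\b{J}\grad(h-\mu)$; these are equivalent, and your explicit handling of the $\mathbb{S}^1$-valued lift and the $C^1$ regularity caveat is if anything more careful than the original.
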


Note that in Theorem~\ref{teoC}, the resulting locally hamiltonian vector field can be only continuous and lacking differentiability. The restriction on the transfer map to be at least $C^2$ in Theorem~\ref{teoA'} is to guarantee the minimum  differentiability required of the characteristic foliation so that we can apply the techniques of Contact Topology.

\section{Basic definitions}

A diffeomorphism $f:M\rightarrow M$ is said to be \emph{partially hyperbolic} if there exists a $Df$-invariant splitting $TM = E^s\oplus E^c\oplus E^u$ of the tangent bundle satisfying the following conditions:
\begin{enumerate}
	\item[(i)] $Df(E^\delta) = E^\delta$, para $\delta = s, c$ e $u$;
	
	\item[(ii)] there are constants $C>0$ e $0<\lambda<1$ such that $$||Df_p^nv_s||\leq C\lambda^n||v_s|| \quad \mbox{e} \quad ||Df_p^{-n}v_u||\leq C\lambda^n ||v_u||,$$ for all $p\in M$, $v_s\in E^s_p$ and $v_u\in E^u_p$;
	\item[(iii)] there are real numbers $0< \mu_1 <1<\mu_2$, usually dependent on the point $p$, such that  $$||Df_p|_{E_p}||<\mu_1 <  ||Df_p|_{E^c_p}||  <\mu_2 <||Df_p|_{E^u_p}||,$$ where $||Df_p|_{E^\delta_p}||$ is the norm of the linear transformation $Df_p|_{E^\delta_p}: E^\delta_p\rightarrow E^\delta_{f(p)}$. When $\mu_1$ and $\mu_2$ are independent of $p$ we say that $f$ is \emph{absolutely partially hyperbolic}.
\end{enumerate}

A \emph{contact manifold} $(M,\xi)$ is an odd dimensional manifold with a codimension $1$ subbundle $\xi$ that is as far as possible from being integrable in the sense of Frobenius. This means that if $\alpha$ is a local defining $1$-form for $\xi$, i.e., $\xi|_U = \ker(\alpha)|_U$ is some open neighborhood $U\subset M$, then $\alpha\wedge (d\alpha)^n \neq 0$ at every point. Such a bundle is called a \emph{contact structure} in $M$. When a globally defined $1$-form $\alpha$ satisfies the condition $\alpha\wedge(d\alpha)^n\neq 0$ at every point in $M$, we say that $\alpha$ is \emph{contact form} in $M$ and $\xi = \ker(\alpha)$ is the associated contact structure. The condition $\alpha\wedge(d\alpha)^n\neq 0$ is equivalent to $d\alpha|_{\ker(\alpha)}$ being nondegenerate which implies that a contact struture arising from a contact form is a particular case of a \emph{symplectic vector bundle}. The \emph{Reeb vector field} associated to a contact form $\alpha$ is the unique vector field $R$ satisfying the following conditions:

\begin{equation}
\left\{
\begin{array}{l}
\alpha(R) = 1\\
\iota_Rd\alpha \equiv 0
\end{array}
\right.
\label{reeb}
\end{equation}

The condition $\iota_Rd\alpha \equiv 0$ means that for every vector field $Y$ in $M$ we have $d\alpha(R,Y) = 0$. Since $d\alpha|_{\ker(\alpha)}$ is nondegenerate, this condition implies that $R$ is one of the vector fields in the unique degenerate direction of $d\alpha$ in $TM$ and the condition $\alpha(R) = 1$ allow us to pick the unique vector field in that direction that is normalized by $\alpha$.

A diffeomorphism $f:M\rightarrow M$ is a \emph{contact diffeomorphism} if there exist a $C^\infty$ function $\lambda: M\rightarrow \mathbb{R}^{+}$ such that $f^\ast\alpha = \lambda\alpha$. This condition implies that both $\xi$ and $\langle R\rangle$ are $Df$-invariant subbundles, where $R$ is the Reeb vector field associated to $\alpha$.

An \emph{isotropic submanifold} of a contact manifold $(M^{2n+1},\alpha)$ is a submanifold $N\subset M$ such that $T_pN \subset \ker(\alpha)_p$, for all $p\in N$. Since $\ker(\alpha)$ is not Frobenius integrable, i.e., $\ker(\alpha)$ does not admit integral submanifolds of maximal dimension, we must have $\dim(N)< 2n$. In fact, the following holds:

\begin{teo}
	Let $(M^{2n+1},\alpha)$ be a contact manifold and $N\subset M$ be an isotropic submanifold of $M$. Then, $\dim(N)\leq n$. Moreover, contact diffeomorphisms maps isotropic submanifolds to isotropic submanifolds.
	\label{contsub}
\end{teo}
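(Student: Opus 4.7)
\medskip

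\noindent\textbf{Proof plan for Theorem~\ref{contsub}.} The plan is to reduce the dimension bound to a purely linear statement about isotropic subspaces of symplectic vector spaces, exploiting the fact, already noted in the basic definitions, that $(\ker\alpha_p, d\alpha_p|_{\ker\alpha_p})$ is a $2n$-dimensional symplectic vector space at every $p \in M$. The final clause about contact diffeomorphisms is then a direct computation from the defining relation $f^{\ast}\alpha = \lambda\alpha$.

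\medskip

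\noindent\textbf{Step 1: point-wise reformulation.} For a fixed $p \in N$, I would show that $T_pN$ sits inside $\ker\alpha_p$ as an isotropic subspace for the symplectic form $d\alpha_p|_{\ker\alpha_p}$. By hypothesis $T_pN \subset \ker\alpha_p$, so $\alpha|_N \equiv 0$. Taking exterior derivative and using naturality with respect to the inclusion $\iota: N \hookrightarrow M$ gives $d(\iota^{\ast}\alpha) = \iota^{\ast}(d\alpha) = 0$, so $d\alpha_p(u,v)=0$ for every pair $u,v \in T_pN$. Therefore $T_pN$ is isotropic inside the symplectic vector space $(\ker\alpha_p, d\alpha_p|_{\ker\alpha_p})$.

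\medskip

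\noindent\textbf{Step 2: the linear bound.} I then invoke (or briefly recall) the standard fact of symplectic linear algebra: if $(V,\omega)$ is a symplectic vector space of dimension $2n$ and $W \subset V$ is an isotropic subspace, then $\dim W \leq n$. The argument is that the symplectic orthogonal $W^{\omega} = \{v \in V : \omega(v,w)=0 \ \forall w \in W\}$ has dimension $2n - \dim W$ (by nondegeneracy of $\omega$), while isotropy means $W \subset W^{\omega}$, giving $\dim W \leq 2n - \dim W$. Applying this to $V = \ker\alpha_p$ and $W = T_pN$ yields $\dim N = \dim T_pN \leq n$, which is the desired bound.

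\medskip

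\noindent\textbf{Step 3: invariance under contact diffeomorphisms.} Let $f:M \to M$ satisfy $f^{\ast}\alpha = \lambda \alpha$ with $\lambda > 0$, and let $N \subset M$ be isotropic. I want to show $f(N)$ is isotropic, i.e., $T_q f(N) \subset \ker\alpha_q$ for every $q = f(p) \in f(N)$. Since $f$ is a diffeomorphism, $T_q f(N) = Df_p(T_p N)$. For any $v \in T_pN$,
\begin{equation*}
\alpha_{f(p)}\bigl(Df_p v\bigr) = (f^{\ast}\alpha)_p(v) = \lambda(p)\,\alpha_p(v) = 0,
\end{equation*}
using $T_pN \subset \ker\alpha_p$ in the last equality. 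Hence $Df_p(T_pN) \subset \ker\alpha_{f(p)}$, so $f(N)$ is isotropic.

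\medskip

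\noindent\textbf{Expected difficulty.} There is essentially no hard step: once one recognizes the symplectic structure on $\ker\alpha$, both claims are immediate. The only point worth writing carefully is that $\iota^{\ast}d\alpha = 0$ follows from $\iota^{\ast}\alpha = 0$ via $d \circ \iota^{\ast} = \iota^{\ast} \circ d$; everything else is either the definition of a contact diffeomorphism or the standard linear-algebraic bound for isotropic subspaces of a symplectic vector space.
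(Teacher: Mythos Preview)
Your proof is correct and is the standard argument. Note, however, that the paper does not actually prove Theorem~\ref{contsub}: it simply refers the reader to Geiges~\cite{geig08} for the proof, so there is no in-paper argument to compare against. Your Steps~1--3 are precisely the proof one finds in that reference, so in that sense your approach coincides with what the paper implicitly intends.
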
  

A proof of this result and more details regarding the structure of submanifolds of contact manifolds can be found in~\cite{geig08}.

Let $G$ be a Lie group and consider the set $\Lambda^r_{k}(M,G)$ of $G$-valued $C^r$ differential $k$-forms of $M$ and let $f:M\rightarrow M$ be a diffeomorphism. A differential $k$-form $\alpha\in \Lambda^r_{k}(M,G)$ is a \emph{coboundary with respect to} a diffeomorphism $f:M\rightarrow M$ if there exists a $C^\ell$ differential $k$-form $\beta$, with $\ell \leq r$, such that 
\begin{equation}
f^\ast\beta - \beta = \alpha.
\label{eq2.1}
\end{equation}
The cases when $k=0$ and $1$, in which $\Lambda^r_0(M,G) = C^r(M;G)$ and $\Lambda^r_1(M) = \Omega^r(M,G)$, are the modules we deal with in this work. 

An equation of the form~\eqref{eq2.1} where both $f$ and $\alpha$ are known and we want to obtain $\beta$ is called a \emph{cohomological equation}. These equations have been extensively studied in the case $k=0$ for different types of groups and more recently by Avila and Kocsard for general diffeomorphism cocycles~\cite{avila18}. Results that guarantee the existence of solutions to the various types cohomological equation under certain conditions are sometimes collectively called \emph{Liv\u{s}ic's Theorems} due to the following result of the seminal work of Liv\u{s}ic [ref]:

\begin{teo}[Liv\u{s}ic's Theorem]
	Let $f:M\rightarrow M$ be an Anosov diffeomorphism of a compact manifold $M$, $(G,+)$ be a commutative Lie group and $\gamma\in C^{\theta}(M; G)$ be a $\theta$-Hölder continuous map. If for each $n$ we have $$\sum_{j=0}^{n-1} \gamma(f^{j}(x)) = 0,$$ for every $n$-periodic point $x$ of $f$, then there exist $\mu\in C^\theta(M;G)$ such that $\gamma = \mu\circ f -\mu$.
\end{teo}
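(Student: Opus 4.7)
The plan is to follow the classical Liv\v{s}ic scheme: build $\mu$ along a dense orbit by summing $\gamma$, prove it is uniformly Hölder on that orbit by combining the Anosov closing lemma with the vanishing-on-periodic-orbits hypothesis, and then extend continuously to all of $M$.

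First I would use topological transitivity of the Anosov diffeomorphism $f$ to pick a point $p_0\in M$ with dense forward orbit. Setting $\mu(p_0)=0$ arbitrarily, I define $\mu$ on the orbit $\mathcal{O}(p_0)=\{f^n(p_0):n\geq 0\}$ by
\[
\mu(f^n(p_0)) \;=\; \sum_{j=0}^{n-1}\gamma(f^j(p_0)),
\]
so that the cohomological relation $\mu\circ f - \mu = \gamma$ holds tautologically along $\mathcal{O}(p_0)$. Since the orbit is injective (if $\mathcal{O}(p_0)$ is dense, $p_0$ is not periodic), there is no consistency issue at this stage.

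The core step is to prove that $\mu$ is Hölder continuous on $\mathcal{O}(p_0)$. Given two orbit points, write them as $q=f^m(p_0)$ and $f^k(q)=f^{m+k}(p_0)$, and suppose they are close in $M$. The Anosov closing lemma then produces a $k$-periodic point $\tilde q$ that shadows the segment $q,f(q),\dots,f^k(q)$ with exponential precision: $d(f^j(q),f^j(\tilde q))\leq C\lambda^{\min(j,k-j)}\,d(f^k(q),q)$ for $0\leq j\leq k$, with some uniform $\lambda<1$. Invoking the standing hypothesis $\sum_{j=0}^{k-1}\gamma(f^j(\tilde q))=0$, and fixing a left-invariant metric $|\cdot|_G$ on $G$, I can estimate
\[
|\mu(f^k(q))-\mu(q)|_G = \Bigl|\sum_{j=0}^{k-1}\bigl(\gamma(f^j(q))-\gamma(f^j(\tilde q))\bigr)\Bigr|_G \leq [\gamma]_\theta\, C^\theta \Bigl(\sum_{j=0}^{k-1}\lambda^{\theta\min(j,k-j)}\Bigr) d(f^k(q),q)^\theta,
\]
where $[\gamma]_\theta$ is the Hölder seminorm of $\gamma$. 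The geometric factor $\sum_j \lambda^{\theta\min(j,k-j)}$ is bounded by a constant independent of $k$, which yields a uniform Hölder estimate on pairs of orbit points within a sufficiently small scale; a compactness argument takes care of pairs that are not close.

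Finally, since $\mathcal{O}(p_0)$ is dense in $M$ and $\mu|_{\mathcal{O}(p_0)}$ is uniformly Hölder, it extends uniquely to a $\theta$-Hölder continuous map $\mu:M\to G$, and the cohomological equation $\mu\circ f - \mu = \gamma$ passes to the limit by continuity of both sides. I expect the main obstacle to be the quantitative use of the Anosov closing lemma and the verification that the Hölder constant arising from the cocycle telescoping is genuinely independent of the shadowing length $k$; the reduction of the Lie-group-valued case to a metric estimate via a left-invariant metric is what lets the commutativity of $G$ enter only through the cancellation in the periodic sums.
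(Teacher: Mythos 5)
The paper does not prove this theorem: it is quoted as a classical result from Liv\u{s}ic's seminal work (the citation is even left as a placeholder ``[ref]''), so there is no in-paper argument to compare against. Your proposal is the standard Liv\u{s}ic argument and is essentially correct: define $\mu$ by telescoping $\gamma$ along a dense orbit, use the quantitative Anosov closing lemma together with the vanishing of the periodic sums to get a uniform $\theta$-H\"older estimate at small scales --- the key quantitative point being that $\sum_{j=0}^{k-1}\lambda^{\theta\min(j,k-j)}\leq 2/(1-\lambda^{\theta})$ independently of $k$ --- and then extend by density. Your handling of the group-valued case is also right: an abelian Lie group admits a bi-invariant metric, and commutativity is exactly what allows the rearrangement/cancellation in the periodic sums, so the whole argument reduces to metric estimates. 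Note also that your estimate does cover all pairs of orbit points, since any two points of the forward orbit are of the form $q$ and $f^{k}(q)$, and boundedness of the continuous extension on the compact $M$ upgrades the small-scale H\"older bound to a global one.

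One point deserves flagging. Your very first step invokes topological transitivity of $f$, which is not among the stated hypotheses --- and it is a well-known open problem whether every Anosov diffeomorphism of a compact manifold is transitive. The classical statement of Liv\u{s}ic's theorem includes transitivity (or is formulated on a topologically transitive basic set), and your proof genuinely needs a dense (forward) orbit, both to define $\mu$ on a dense set and to run the closing-lemma estimate between arbitrary orbit points. So strictly speaking you are proving the standard, transitive version of the theorem; the statement as printed in the paper is loose on this hypothesis, and you should either add transitivity explicitly or remark that all known Anosov diffeomorphisms satisfy it. This is a defect of the quoted statement rather than of your argument, but in a blind proof it should be stated as an assumption rather than asserted as a consequence of the Anosov property.
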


Let $S\subset M^3$ be a surface in a $3$-dimensional manifold and $\xi\subset TM$ be codimension $1$ subbundle. The \emph{characteristic foliation} of $\xi$ in $S$ is the foliation of $S$ generated by the field of directions $\xi\cap TS$. A field of directions can be thought as an equivalence class of vector fields modulo multiplication by $C^\infty$ strictly positive functions and a representative of this equivalence class is what we call a \emph{characteristic vector field}. In general, the characteristic foliation has singular points which are precisely the point where $\xi$ is tangent to $S$.

Let $(M,\omega)$ be a symplectic manifold and $g$ a riemannian metric in $M$. An \emph{almost complex structure} in $M$ is a $(1,1)$-tensor $\b{J}:TM\rightarrow TM$ satisfying $\b{J}^2 = - \id_{2n}$. The almost complex structure $\b{J}$ is said to be \emph{compatible} with the symplectic and the riemmanian structures of $M$ if the following conditions holds:
\begin{enumerate}
	\item[(i)] $\omega(u,\b{J}v) = g(u,v)$;
	\item[(ii)] $g(\b{J}u,v) = \omega(u,v)$,
	\item[(iii)]  $\b{J}u = S^{-1}\mu(u)$,
\end{enumerate}
where $S:TM\rightarrow TM$ and $\mu:TM\rightarrow TM$ are the isomorphisms induced by $g$ and $\omega$, respectively. In this case we also say that $(\omega,g,\b{J})$ is a \emph{compatible triple}. All symplectic manifolds admits a compatible almost complex structure (see Proposition 4.1 in~\cite{McDi98}).

Let $H:M\rightarrow\mathbb{R}$ be a $C^k$ function on an symplectic almost complex manifold $(M,\omega,g,\b{J})$ with a compatible triple of structures. A \emph{hamiltonian vector field} $X_H$ in $M$ is the vector field defined by $$X_H = -\b{J}\grad(H),$$ where $\grad$ is the gradient of the function $H$ with respect to the metric $g$.

\section{Proof of Theorem A}





\subsection{Main proof}


	


	

For a partially hyperbolic diffeomorphism on a $3$-dimensional manifold the only codimension $1$ invariant subbundles are $E^s\oplus E^u$, $E^s\oplus E^c$ or $E^c\oplus E^u$. Since the skew-product in Theorem A is dynamically coherent it follows that the only possibility for an invariant contact structure is $E^s\oplus E^u$.

The $DF$-invariant central direction is the direction of the fibers, i.e., $E^c = T\mathbb{S}^1$ and since $\gamma$ is a coboundary, Theorem~\ref{teoB} implies that there exist a codimension $1$ subbundle $\xi$ transversal to $T\mathbb{S}^1 = E^c$ such that $\xi$ is tangent to the graph of the transfer map $\mu\in C^1(\mathbb{T}^{2n+1};\mathbb{S}^1)$ of $\gamma$. However, the only $DF$-invariant subbundle transversal to $T\mathbb{S}^1$ is $E^s\oplus E^u$. Therefore, $E^s\oplus E^u$ is tangent to the graph of $\mu$ and Theorem A follows.


\subsection{Particular case in dimension $3$}

\begin{teo}[Theorem A]
	There are no partially hyperbolic contact $C^r$ skew-products $F:\mathbb{T}^{2}\times \mathbb{S}^1\rightarrow \mathbb{T}^{2}\times \mathbb{S}^1$ of the form $F(p,t) = (f(p),t+\gamma(p))$, where $f:\mathbb{T}^{2}\rightarrow \mathbb{T}^{2}$ is an Anosov conservative $C^r$ diffeomorphism and $\gamma\in C^r(\mathbb{T}^{2};\mathbb{S}^1)$ is a coboundary with $C^\ell$ transfer map, for $2\leq \ell\leq r$.
	\label{teoA'}
\end{teo}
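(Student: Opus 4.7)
The plan is a proof by contradiction, using Theorem~\ref{teoC} to identify an obstruction inside the characteristic foliation of the hypothetical contact structure. Suppose there exists a partially hyperbolic contact $C^r$ skew-product $F$ as in the statement, with invariant contact structure $\xi\subset T(\mathbb{T}^{2}\times\mathbb{S}^{1})$. Since $F$ is dynamically coherent with central bundle $E^c = T\mathbb{S}^{1}$, the only candidates for a $DF$-invariant codimension-$1$ subbundle of $T(\mathbb{T}^{2}\times\mathbb{S}^{1})$ are $E^s\oplus E^c$, $E^c\oplus E^u$, and $E^s\oplus E^u$. The first two are tangent to the center-stable and center-unstable foliations, hence Frobenius integrable, so they cannot carry a contact structure. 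Therefore $\xi = E^s\oplus E^u$, which is transversal to $T\mathbb{S}^{1}$.

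Next I would invoke Theorem~\ref{teoC}. Since $\gamma$ is a coboundary with transfer map $\mu\in C^\ell(\mathbb{T}^{2};\mathbb{S}^{1})$ and $\ell\geq 2$, the theorem produces a $DF$-invariant codimension-$1$ subbundle transversal to $T\mathbb{S}^{1}$ whose characteristic foliation on any transversal surface is represented by a locally Hamiltonian vector field. By the uniqueness in the previous step, this subbundle must coincide with $\xi = E^s\oplus E^u$. Fixing an arbitrary hypersurface $S\subset \mathbb{T}^{2}\times\mathbb{S}^{1}$ transversal to $T\mathbb{S}^{1}$ (for example $S = \mathbb{T}^{2}\times\{0\}$), the characteristic vector field $X_\xi$ of the line field $\xi\cap TS$ on $S$ is locally Hamiltonian. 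In particular, $X_\xi$ is divergence-free with respect to the natural area form on $S$ (inherited from the area form on $\mathbb{T}^{2}$), and on any simply connected $U\subset S$ it admits a Hamiltonian $H\colon U\to\mathbb{R}$ whose level sets are precisely the orbits of $X_\xi$.

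The final step, and the main obstacle, is to extract a contradiction from the contact condition $\alpha\wedge d\alpha\neq 0$ on $\mathbb{T}^{2}\times\mathbb{S}^{1}$. The hypothesis $\ell\geq 2$ guarantees that $X_\xi$ is of class $C^1$, so the characteristic foliation $\xi\cap TS$ is a genuine $C^1$ foliation of $S$ by level sets of the local Hamiltonian. Varying $S$ within the $1$-parameter family $S_t = \mathbb{T}^{2}\times\{t\}$, the smoothness of $\mu$ forces the local Hamiltonians $H_t$ to fit together smoothly in $t$, producing a $C^1$ family of codimension-$1$ foliations of the horizontal surfaces. Combining this with the transversality of $\xi$ to $T\mathbb{S}^{1}$ and the $DF$-invariance of $\xi$, the level curves $\{H_t = c\}$ can be assembled across $t$ into a codimension-$1$ integral submanifold of $\xi$ inside $\mathbb{T}^{2}\times\mathbb{S}^{1}$. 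Such a $2$-dimensional integral submanifold contradicts the Frobenius obstruction $\alpha\wedge d\alpha\neq 0$, which is exactly the maximal non-integrability of a contact structure in dimension $3$, concluding the argument.

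The hard part will be the gluing step: one must ensure that the locally Hamiltonian functions $H_t$ vary smoothly with $t$, and that the resulting surface is genuinely tangent to $\xi$ in the direction transversal to the fibers, not only along the characteristic direction. The assumption $\ell\geq 2$ is exactly what provides the $C^1$-regularity of $X_\xi$ required for this Frobenius-type construction to go through; with lower regularity the candidate integral surface might fail to be $C^1$ and the contact condition could not be tested in the usual way.
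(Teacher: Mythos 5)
Your first two steps coincide with the paper's: the reduction to $\xi = E^s\oplus E^u$ as the unique $DF$-invariant codimension-$1$ subbundle transversal to $T\mathbb{S}^1$, and the appeal to Theorem~\ref{teoC}. Where you diverge is the endgame, and that is where your proposal has a genuine gap. The paper never attempts a Frobenius-type gluing; it concludes locally and pointwise on a single transversal surface. By Lemma~\ref{lema31} (Geiges' criterion, Lemma 4.6.3 in~\cite{geig08}), a vector field on an embedded surface $S$ represents the characteristic foliation of a contact structure only if its divergence is nonzero at every singular point; but for the locally Hamiltonian field of Theorem~\ref{teoC} one has $\iota_{X_\xi}\omega = dH$ in Darboux coordinates, hence $\Div_\omega(X_\xi)\,\omega = \mathcal{L}_{X_\xi}\omega = d(\iota_{X_\xi}\omega) = d(dH) = 0$ identically, so the criterion fails and $\xi$ cannot be a contact structure. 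No assembling of level sets across the family $S_t$ is needed.

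Your gluing step, by contrast, is exactly the point you concede is ``the hard part,'' and on the data you actually have it does not go through: knowing the characteristic foliations $\xi\cap TS_t$ slice by slice, even with $C^1$ regularity, does not determine how $\xi$ tilts in the $t$-direction. A union of characteristic curves $C_t\subset S_t$ is tangent to $\xi$ only if the direction of variation in $t$ also lies in $\xi$, and nothing in the statement of Theorem~\ref{teoC} controls that direction. Filling this hole requires knowing the defining $1$-form $\xi = \ker(dt - d\widetilde{\mu})$ --- but that is precisely the content of the proof of Proposition~\ref{teo4.1}, and once you have it the integral surface you are trying to build is simply $\graf(\mu)$, with no need for the Hamiltonians at all: tangency of $\graf(\mu)$ to $\xi$ together with Theorem~\ref{contsub} (isotropic submanifolds of a contact manifold $M^{2n+1}$ have dimension at most $n$, so at most $1$ here) already yields the contradiction. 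In other words, the repaired version of your argument collapses into the paper's proof of the general Theorem~\ref{teoA}, which needs only a $C^1$ transfer map, whereas the paper's dimension-$3$ proof via characteristic foliations exists precisely to avoid the gluing through the divergence computation. Either cite Proposition~\ref{teo4.1} and Theorem~\ref{contsub} directly and drop the Hamiltonian detour, or replace your final step with the divergence criterion of Lemma~\ref{lema31}.
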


\begin{proof}

Since $\gamma$ is a coboundary, Theorem~\ref{teoC} implies that there exist a codimension $1$ subbundle $\xi$ such that for any surface $S$ transversal to $\mathbb{S}^1$ the characteristic foliation can represented by a locally hamiltonian vector field $X_\xi = -\b{J}\grad(H)$, for some differentiable function $H: M\rightarrow\mathbb{S}^1$.

The following lemma gives a necessary and sufficient condition for a codimension $1$ subbundle to be a contact structure near an embedded surface in a $3$-dimensional manifold.

\begin{lema}
A vector field $X$ on an embedded surface $S$ in a $3$-dimensional manifold represents the characteristic foliation of a contact structure if and only if the following condition holds: $\Div_\omega(X)(p) \neq 0$ on all points singular points $p\in S$ of $X$, where $\Div_\omega(X)$ is the divergence of the vector field $X$ with respect an area form $\omega$ in $S$
\label{lema31}
\end{lema}

For a proof of Lemma~\ref{lema31}, see Lemma 4.6.3 in~\cite{geig08}.

Recall that the divergence of vector field $X$ with respect to some area form $\omega$ in $S$ is uniquely defined the function  $\Div_\omega(X): S\rightarrow \mathbb{R}$ such that $$\mathcal{L}_X\omega = \Div_\omega(X)\omega,$$ where $\mathcal{L}_X\omega$ is the Lie derivative of $\omega$ with respect to $X$. By Cartan's Formula, $$\mathcal{L}_X\omega = \iota_Xd\omega+d(\iota_X\omega),$$ and the fact $\omega$ is a symplectic form in $S$, we have $$\Div_\omega(X)\omega = d(\iota_X\omega).$$

Now, choosing Darboux local (symplectic) coordinates on $S$ we write $\omega = dx\wedge dy$ and $$X = -\dfrac{\partial H}{\partial y}\dfrac{\partial}{\partial x}+\dfrac{\partial H}{\partial x}\dfrac{\partial}{\partial y}$$ in these coordinates. It follows that $\iota_X\omega = dH$ and since $\omega$ is nondegenerate, we conclude that $\Div_\omega(X)(p)\omega = 0$ at any  point. This implies that the $DF$-invariant codimension $1$ subbundle $\xi$ of Theorem~\ref{teoC} cannot be a contact structure.

Again, since the only $DF$-invariant codimension $1$ subbundle transversal to $T\mathbb{S}^1$ is $E^s\oplus E^u$ it follows that $F$ cannot be a contact diffeomorphism and Theorem A follows.

\end{proof}

\section{Proof of Theorem 1.1}
Let $\alpha$ be a $1$-form in $M\times \mathbb{S}^1$, transversal to $\mathbb{S}^1$ in the sense that the tangent direction $\dfrac{\partial}{\partial t}$ to $\mathbb{S}^1$ is transversal to $\ker(\alpha)$, for all $(p,t)\in M\times\mathbb{S}^1$, and $F:M\times\mathbb{S}^1\rightarrow M\times \mathbb{S}^1$ a skew-product of the form $$F(p,t) = (f(p),\phi_t(p)),$$ where $f:M\rightarrow M$ is an Anosov diffeomorphism and $\phi(p):\mathbb{S}^1\rightarrow \mathbb{S}^1	$ a family of circle diffeomorphisms with parameters in $M$.



\begin{lema}
	Let $\alpha = v_tdt+\beta_t$ be a $1$-form in $M\times \mathbb{S}^1$, where $\beta_1\in\Omega^\ell_1(M)$, and suppose that $\ker(\alpha)$  is $F$-invariant and transversal to $\mathbb{S}^1$. Then,
	\begin{equation}
	F^\ast\left(\dfrac{\beta_{t}}{v_t}\right) - \phi_t'\dfrac{\beta_t}{v_t} = -d\phi_t,
	\label{lemaeq31}
	\end{equation}
	where $\phi_t'$ is the derivative of $\phi_t$ with respect to $t$ and $F^\ast: \Omega^\ell_1(M\times\mathbb{S}^1)\rightarrow \Omega^\ell_1(M\times\mathbb{S}^1)$ is the pull-back transformation induced by $F$.
	Conversely, if~\eqref{lemaeq31} holds, with $v_t\neq 0$ at every point, then $\ker(\alpha)$ $DF$-invariant.
	\label{lema3121}
\end{lema}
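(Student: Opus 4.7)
The plan is to translate the $F$-invariance of $\ker(\alpha)$ into the relation $F^{\ast}\alpha=\lambda\alpha$ for some nonvanishing function $\lambda$ on $M\times\mathbb{S}^1$, and then simply equate the $dt$-components and the $M$-components of both sides of this identity. The transversality condition $\alpha(\partial/\partial t)=v_t\neq 0$ is what guarantees on the one hand that a multiplier $\lambda$ actually exists (i.e.\ $F^{\ast}\alpha$ and $\alpha$ annihilate the same hyperplane), and on the other hand that we can divide by $v_t$ and solve for $\lambda$ from the $dt$-component.

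To carry out the computation, I would first write the target coordinates as $(q,s)\in M\times\mathbb{S}^1$ and view $\alpha$ as $\alpha = v(q,s)\,ds + \beta_{s}(q)$, where $\beta_s\in\Omega^{\ell}_1(M)$ depends on the parameter $s$ and has no $ds$ component. Using $F(p,t)=(f(p),\phi_t(p))$, the differential $d(\phi_t(p))$ on $M\times\mathbb{S}^1$ decomposes as $d_p\phi_t+\phi_t'\,dt$, where $d_p\phi_t$ denotes the exterior differential in the $M$-direction for fixed $t$ and $\phi_t'$ is $\partial\phi_t/\partial t$. Pulling back each piece gives
\begin{equation*}
F^{\ast}\alpha \;=\; \bigl(v_{\phi_t(p)}(f(p))\bigr)\phi_t'\,dt \;+\; \bigl(v_{\phi_t(p)}(f(p))\bigr)\,d_p\phi_t \;+\; f^{\ast}\bigl(\beta_{\phi_t(p)}\bigr),
\end{equation*}
where the last two terms have no $dt$-component.

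Next I would use $F^{\ast}\alpha=\lambda\alpha$ and match coefficients. The $dt$-component gives $\lambda v_t = v_{\phi_t(p)}(f(p))\,\phi_t'$, so $\lambda = v_{\phi_t(p)}(f(p))\phi_t'/v_t$, which is well defined since $v_t\neq 0$. Substituting this $\lambda$ into the $M$-component equation and dividing through by $v_{\phi_t(p)}(f(p))$, the identity
\begin{equation*}
f^{\ast}\!\left(\frac{\beta_{\phi_t(p)}}{v_{\phi_t(p)}}\right) \;-\; \phi_t'\,\frac{\beta_t}{v_t} \;=\; -d\phi_t
\end{equation*}
drops out directly, which is exactly \eqref{lemaeq31} once one reads $F^{\ast}(\beta_t/v_t)$ as the pullback (in the $M$-direction) of the form $\beta_{\phi_t(p)}/v_{\phi_t(p)}$ evaluated along $F$. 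For the converse, assuming \eqref{lemaeq31} and $v_t\neq 0$, one simply defines $\lambda:=v_{\phi_t(p)}(f(p))\phi_t'/v_t$ and verifies, by reversing the matching of $dt$- and $M$-components, that $F^{\ast}\alpha=\lambda\alpha$, hence $\ker(\alpha)$ is $DF$-invariant.

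There is no serious obstacle in the proof; the only real care needed is bookkeeping the two different roles played by the variable $t$ (as a fiber coordinate on the source and as a parameter indexing the family $\beta_t, v_t, \phi_t$), and being consistent about whether $d\phi_t$ denotes the total exterior differential of $(p,t)\mapsto\phi_t(p)$ or its $M$-component $d_p\phi_t$. Since both sides of \eqref{lemaeq31} are naturally interpreted as families of $1$-forms on $M$ parametrized by $t$, the symbol $d\phi_t$ must mean $d_p\phi_t$, and once this is fixed the entire argument is a direct coefficient comparison.
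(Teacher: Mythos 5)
Your proposal is correct and follows essentially the same route as the paper's proof: express invariance as $F^{\ast}\alpha=\lambda\alpha$, pull back through the splitting $T(M\times\mathbb{S}^1)\cong TM\oplus T\mathbb{S}^1$, match the $dt$- and $M$-components to get $\lambda v_t=(v_{\phi_t}\circ f)\phi_t'$ and the $M$-direction equation, then eliminate $\lambda$ using $v_t\neq 0$, with the converse obtained by reversing the coefficient comparison. If anything, your bookkeeping is slightly more scrupulous than the paper's (which abbreviates $v_{\phi_t}\circ f$ to $v_{\phi_t}$ and leaves the interpretation of $d\phi_t$ as the $M$-direction differential to a parenthetical remark), and your observation that transversality is what makes the multiplier $\lambda$ well defined is a point the paper leaves implicit.
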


\begin{proof}
	
	Consider the splitting $T(M\times \mathbb{S}^1)\cong TM\oplus T\mathbb{S}^1$ induced by the global trivialization of the trivial bundle $M\times\mathbb{S}^1$. This splitting induces a module isomorphism between the $C^r(M\times\mathbb{S}^1,\mathbb{R})$-modules $\Omega^r_1(M\times\mathbb{S}^1)$ and $\Omega^r_1(M)\oplus\Omega^r_1(\mathbb{S}^1)$, which is realized by taking the natural projections $\pi_1: TM\oplus T\mathbb{S}^1\rightarrow TM$ and $\pi_2:TM\oplus T\mathbb{S}^1\rightarrow T\mathbb{S}^1$ and noting that the induced pullback maps $\pi_1^\ast:\Omega^r_1(M)\rightarrow \Omega^r_1(M\times\mathbb{S}^1)$ and $\pi_2^\ast: \Omega^r_1(\mathbb{S}^1)\rightarrow \Omega^r_1(M\times\mathbb{S}^1)$ satisfies $\Omega^r_1(M\times\mathbb{S}^1) = \pi_1^\ast\Omega^r_1(M)\oplus \pi_2^\ast\Omega^r_1(\mathbb{S}^1)$. Hence any $1$-form in $\Omega_1^r(M)$ or $\Omega^r_1(\mathbb{S}^1)$ can be considered as a $1$-form in $\Omega_1^r(M\times \mathbb{S}^1)$ via this identification.
	
	Since $\ker(\alpha)$ is $DF$-invariant there exists $\lambda: M\times \mathbb{S}^1\rightarrow \mathbb{R}$ such that $F^\ast \alpha = \lambda\alpha$. Then $$F^\ast\alpha = \lambda\alpha \r F^\ast(v_tdt +\beta_t) = \lambda(v_tdt+\beta_t) \r $$
	
	\begin{equation}
	F^\ast(v_tdt)+F^\ast\beta_t = \lambda v_tdt+\lambda\beta_t
	\label{peq1}
	\end{equation}

	Note that $$F^\ast(v_tdt) = (v_{\phi_t}\circ f) d\phi_t = (v_{\phi_t}\circ f) d\phi_t+(v_{\phi_t}\circ f)\phi_t'dt,$$ where $d\phi_t$ is the partial derivative of $\phi_t$ in the direction of $TM$ with respect to the splitting $T(M\times\mathbb{S}^1) \cong TM\oplus T\mathbb{S}^1$. For simplicity, we will write $v_{\phi_t}$ instead of $v_{\phi_t}\circ f$ so that $F^\ast(v_tdt) = v_{\phi_t}d\phi_t+v_{\phi_t}\phi'_tdt$. Then, from this and~\eqref{peq1} we have
	
	$$(v_{\phi_t}\phi'_t -\lambda v_t)dt = F^\ast\beta_t - \lambda \beta_t +v_{\phi_t}d\phi_t.$$
	
	Note that the left side of this equation is a multiple of $dt$ while that right side is a linear combination of $dx$ and $dy$, since $\beta_t\in\Omega^\ell_1(M)$. Now, since $\left\{dx, dy, dt\right\}$ is a local basis for $\Omega^\ell_1(M\times\mathbb{S}^1)$, it follows that
	
	\begin{equation}
	\phi'_tv_{\phi_t} - \lambda v_t = 0
	\label{peq2}
	\end{equation}
	and
	\begin{equation}
	F^\ast\beta_t - \lambda \beta_t+v_{\phi_t}d\phi_t = 0
	\label{peq3}
	\end{equation}
	by linear independency of $dx$, $dy$ and $dt$. The transversality condition of $\ker(\alpha)$ implies that either $v_t>0$ or $v_t<0$ at every point, otherwise there would be point $(p,t)\in M\times\mathbb{S}^1$ such that $T_p\mathbb{S}\subset \ker(\alpha)_{(p,t)}$. Thus, $\dfrac{\beta_t}{v_t}$ is defined in all of $M\times\mathbb{S}^1$ and by~\eqref{peq3} we obtain 
	$$-d\phi_t = \dfrac{F^\ast\beta_t}{v_{\phi_t}} - \dfrac{\lambda \beta_t}{v_{\phi_t}}$$
	
	Recall that $F^\ast v_t = v_{\phi_t}$, which implies $F^\ast (v_t)^{-1} = v_{\phi_t}^{-1}$. Also,  by~\eqref{peq2} we have $\dfrac{\lambda}{v_{\phi_t}} = \dfrac{\phi'_t}{v_t}$. Then, by the previous equation we obtain
	
	$$-d\phi_t =   F^\ast\left(\dfrac{\beta_t}{v_t}\right) - \phi'_t\dfrac{\beta_t}{v_{t}}.$$
	


	Conversely, suppose~\eqref{lemaeq31} holds. Then, $F^\ast\left(\dfrac{\beta_t}{v_t}\right) =-d\phi_t   + \phi'_t\dfrac{\beta_t}{v_{t}}$ and $$F^\ast \left(\dfrac{\alpha}{v_t}\right) = F^\ast(dt)+F^\ast\left(\dfrac{\beta_t}{v_t}\right) =$$ $$ = d\phi_t+\phi_t'dt+\phi_t'\dfrac{\beta_t}{v_t}-d\phi_t = \phi_t'dt+\phi_t'\dfrac{\beta_t}{v_t} \r $$ $$  F^\ast(v_t^{-1})F^\ast\alpha = \dfrac{\phi'_t}{v_t}\left(v_tdt+\beta_t\right) \r$$ $$(v_{\phi_t}\circ f)^{-1} F^\ast \alpha = \dfrac{\phi'_t}{v_t}\left(v_tdt+\beta_t\right) \r$$  $$F^\ast \alpha = \dfrac{\phi'_t v_{\phi_t}\circ f}{v_t}\left(v_tdt+\beta_t\right) = \lambda \alpha$$ where $\lambda = v_t^{-1}\cdot\phi_t'\cdot(v_{\phi_t}\circ f)$. Therefore, $\ker(\alpha)$ is $F$-invariant.
	
\end{proof}

The previous lemma assumes a differential $1$-form with a specific form, which at first seems to be very restrictive. The next lemma however, shows that this is not really the case.

\begin{lema}
	Let $M^n$ be a compact riemannian manifold and $S\subset M$ be a codimension $1$ closed submanifold of $M$. Then, every $C^r$ differential $1$-form $\alpha$ can be written as $\alpha = v_tdt+\beta_t$ in a neighborhood $V_\varepsilon$ of $S$, where $v_t\in C^r(S;\mathbb{R})$ and $\beta_t\in \Omega^r_1(S)$ are families of $C^r$ maps and families of $C^r$ differential $1$-forms of $S$, respectively.
	\label{lema21}	
\end{lema}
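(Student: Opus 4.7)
The plan is to obtain an explicit product structure on a tubular neighborhood of $S$ in $M$ and then read off the decomposition from the induced splitting of $1$-forms, in direct analogy with the trivialization employed in the proof of Lemma~\ref{lema3121}. Since $S$ is a closed codimension $1$ submanifold of the compact manifold $M$, the tubular neighborhood theorem furnishes an $\varepsilon > 0$ and a diffeomorphism $\Phi \colon S \times (-\varepsilon, \varepsilon) \to V_\varepsilon$ onto an open neighborhood $V_\varepsilon$ of $S$ in $M$ with $\Phi(p, 0) = p$ for every $p \in S$. I would tacitly assume that the normal bundle of $S$ is trivial, which is automatic in every application of this lemma in the paper, since the relevant $S$ is always a slice of the product $M \times \mathbb{S}^1$; if $S$ were one-sided one would pass to the orientation double cover. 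After pulling back by $\Phi^{-1}$, it suffices to prove the statement for $\alpha \in \Omega^r_1(S \times (-\varepsilon, \varepsilon))$.

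Next I would exploit the canonical splitting $T(S \times (-\varepsilon, \varepsilon)) \cong \pi_S^{\ast} TS \oplus \pi_I^{\ast} T(-\varepsilon, \varepsilon)$, where $\pi_S$ and $\pi_I$ denote the two projections, and set, for each $(p, t)$,
$$v_t(p) := \alpha_{(p,t)}\!\left(\frac{\partial}{\partial t}\right), \qquad \beta_t\big|_p := \alpha_{(p,t)}\big|_{T_p S}.$$
By construction $\alpha = v_t\, dt + \beta_t$ throughout $V_\varepsilon$. The regularity claim is then automatic: $v_t$ is $C^r$ jointly in $(p, t)$ because $\partial/\partial t$ is a smooth vector field and $\alpha$ is $C^r$, while $\beta_t$ is a $C^r$ family of $1$-forms on $S$ because the restriction of a $C^r$ form to the $C^\infty$ subbundle $\pi_S^{\ast} TS$ preserves the degree of smoothness.

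There is no substantive obstacle here; the content of the lemma is essentially the existence of a tubular neighborhood together with the tautological decomposition of $1$-forms on a product. The only mild subtlety, namely orientability of the normal bundle of $S$, is, as noted above, automatic in the uses made of this lemma in the paper.
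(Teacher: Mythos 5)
Your proof is correct and follows essentially the same route as the paper's: a tubular neighborhood realizing $S\times(-\varepsilon,\varepsilon)$ inside $M$ (using compactness for a uniform $\varepsilon$), followed by the tautological splitting of $1$-forms on the product. The only differences are cosmetic and in your favor: the paper carries out the decomposition in local slice charts $(W_i,x_1,\ldots,x_{n-1},t)$ rather than via the invariant splitting $T(S\times(-\varepsilon,\varepsilon))\cong \pi_S^{\ast}TS\oplus\pi_I^{\ast}T(-\varepsilon,\varepsilon)$, and it tacitly assumes the triviality of the normal bundle of $S$ (needed to embed $S\times(-\varepsilon,\varepsilon)$ at all), a point you correctly flag and correctly observe is automatic in every application, where $S$ is a slice $M\times\{t_0\}$.
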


\begin{proof}
	
	Consider a tubular neighborhood $V$ of $S$ in $M$. Since $M$ is compact we can find $\varepsilon>0$ such that $S\times(-\varepsilon,\varepsilon)$ is embedded into $V$, where we identify $S$ with $S\times\left\{0\right\}$.
	
	
	Let $\varphi: S\times(-\varepsilon,\varepsilon)\rightarrow V$ be this embedding and $\widetilde{V}_\varepsilon = \varphi(S\times(-\varepsilon,\varepsilon))$. Consider the projection $\pi: \widetilde{V}_\varepsilon \rightarrow S$ induced by the canonical projection $\pi_0:S\times(-\varepsilon,\varepsilon)\rightarrow S$ defined by $\pi = \pi_0\circ\varphi^{-1}$. For $\varepsilon>0$ sufficiently small we can assume that $\widetilde{V}_\varepsilon$ belongs to a cover of $S$ by slice charts of $M$, i.e., $$S\subset \bigcup_{i\in I} W_i,$$ where $(W_i,x_1,\ldots,x_{n-1},t)$ are local coordinates with the property that   \linebreak $S\cap W_i = \left\{p\in W_i; \ t(p)=0 \right\}$. Note that we can always find such an $\varepsilon$ by passing to a finite subcover of $S$ and take $\varepsilon>0$ to be the smallest $\varepsilon_i>0$ such that $\widetilde{V}_{\varepsilon_i} = \varphi((S\cap W_i)\times(-\varepsilon_i,\varepsilon_i))\subset W_i$. Then, if $\alpha$ is a $1$-form in $M$ its restriction to the local charts $\widetilde{V}_{\varepsilon_i}$ is written as $$i_{\widetilde{V}_\varepsilon}^\ast\alpha = v_tdt+\sum_{j=1}^{n-1} u_jdx_j = v_tdt+\beta_t,$$ where $i_{\widetilde{V}_\varepsilon}:\widetilde{V}_\varepsilon\rightarrow M$ is the inclusion, $v_t\in C^k(S;\mathbb{R})$ and $\beta_t = \d\sum_{j=1}^{n-1} u_j( \ \cdot \ , t)dx_j \in\Omega_1^k(S)$ is the local representation of $\beta_t$ with coordinate functions $u_j( \ \cdot \ , t)\in C^k(\widetilde{V}_\varepsilon\cap S;\mathbb{R})$.
	
\end{proof}

\vspace{0.5cm}

\pagebreak

\begin{prop}
Let $M^n$ be a compact $n$-dimensional manifold and $F:M\times \mathbb{S}^1\rightarrow M\times \mathbb{S}^1$ be a skew-product of the form $F(p,t) = (f(x),t+\gamma(p))$, where $f$ is a Anosov  $C^r$ diffeomorphism  and $\gamma\in C^r(M;\mathbb{S}^1)$, $r\geq 1$. If $\gamma$ is coboundary with transfer map $\mu\in C^1(M;\mathbb{S}^1)$ then $\graf(\mu)$ is tangent to a $DF$-invariant codimension-$1$ subbundle $\xi\subset T(M\times\mathbb{S}^1)$ transversal to $\mathbb{S}^1$.
\label{teo4.1}	
\end{prop}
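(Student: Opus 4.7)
My plan is to construct $\xi$ directly as the kernel of an explicit globally defined, $F$-invariant $1$-form built from the transfer map $\mu$. Since $\mu\in C^1(M;\mathbb{S}^1)$, its differential $d\mu$ is a continuous real-valued $1$-form on $M$, and I pull it back to $M\times\mathbb{S}^1$ via the projection $\pi_1:M\times\mathbb{S}^1\to M$. The natural candidate is
$$\alpha \;:=\; dt - \pi_1^*d\mu,$$
viewed as a continuous $1$-form on $M\times\mathbb{S}^1$.

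A direct computation, using $\pi_1\circ F = f\circ\pi_1$ and $\gamma=\mu\circ f-\mu$ (so that $d\gamma = f^*d\mu - d\mu$), gives
$$F^*\alpha \;=\; d(t+\gamma) - d(\mu\circ f) \;=\; dt + d\gamma - f^*d\mu \;=\; dt + (f^*d\mu - d\mu) - f^*d\mu \;=\; dt - d\mu \;=\; \alpha,$$
so $\alpha$ is $F$-invariant. Equivalently, this can be checked by applying Lemma~\ref{lema3121} with $v_t\equiv 1$ and $\beta_t\equiv -d\mu$ (both constant in $t$) and $\phi_t(p)=t+\gamma(p)$: the condition $F^*(\beta_t/v_t)-\phi_t'\beta_t/v_t=-d\phi_t$ reduces to $f^*d\mu - d\mu = d\gamma$, which is the exterior derivative of the coboundary equation. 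Setting $\xi:=\ker\alpha$ thus yields a $DF$-invariant codimension $1$ continuous subbundle.

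The remaining verifications are then straightforward. Transversality of $\xi$ to $T\mathbb{S}^1$ follows because $\alpha(\partial/\partial t)\equiv 1\neq 0$, so $\partial/\partial t\notin\xi_{(p,t)}$ for every $(p,t)$. For the tangency of $\graf(\mu)$ to $\xi$: in the trivialization $T(M\times\mathbb{S}^1)\cong TM\oplus T\mathbb{S}^1$, a tangent vector to $\graf(\mu)$ at $(p,\mu(p))$ has the form $(v,d\mu_p(v))$ for some $v\in T_pM$, and $\alpha(v,d\mu_p(v))=d\mu_p(v)-d\mu_p(v)=0$. Since $\dim T_{(p,\mu(p))}\graf(\mu)=\dim \xi_{(p,\mu(p))}=n$, the two subspaces coincide.

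The only delicate point is regularity: as $\mu$ is only assumed $C^1$, $d\mu$ is merely continuous, so $\xi$ is a continuous---not necessarily differentiable---distribution, which is nevertheless consistent with the statement of the proposition and with the regularity of $\mathcal{F}$ in Theorem~\ref{teoB}. I anticipate no serious obstacle beyond carefully tracking this regularity; in particular, the subsequent upgrade from a single invariant graph to a full codimension $1$ foliation (needed for the ``only if'' direction of Theorem~\ref{teoB}) should follow because $F$ preserves each translate $\graf(\mu+c)$ for $c\in\mathbb{S}^1$, by the very same coboundary relation, and these translates together foliate $M\times\mathbb{S}^1$ with common tangent distribution $\xi$.
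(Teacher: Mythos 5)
Your proof is correct, and it is in essence a streamlined version of the second half of the paper's argument: the invariant form you construct, $\alpha = dt - \pi_1^\ast d\mu$, is exactly the form $\widetilde{\alpha} = dt - d\widetilde{\mu}$ of the paper, with the same invariance computation $F^\ast\widetilde{\alpha} = \widetilde{\alpha}$ from $d\gamma = f^\ast d\mu - d\mu$, the same transversality check $\widetilde{\alpha}(\partial/\partial t) = 1$, and the same tangency verification on $\graf(\mu)$. The structural difference is in how the bundle $\xi$ enters. The paper \emph{presupposes} a $DF$-invariant codimension $1$ subbundle $\xi$ transversal to $T\mathbb{S}^1$, writes a local defining form as $\alpha = v_t dt + \beta_t$ via the tubular-neighborhood decomposition of Lemma~\ref{lema21}, derives the cohomological equation $F^\ast(\beta_t/v_t) - \beta_t/v_t = -d\gamma$ from Lemma~\ref{lema3121}, and then invokes the assertion that partial hyperbolicity with one-dimensional center forces \emph{uniqueness} of the invariant transversal codimension $1$ subbundle in order to identify $\xi$ locally with $\ker\widetilde{\alpha}$. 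You instead construct $\xi := \ker\alpha$ globally and directly, which proves existence --- all the proposition literally asks for --- without the local chart machinery and, notably, without relying on the uniqueness claim, which the paper states but does not prove. What the paper's longer route buys is the identification of the \emph{given} invariant bundle (in the application, $E^s\oplus E^u$) with $\ker(dt - d\widetilde{\mu})$, which is the fact actually used in deriving Theorem~\ref{teoA}; but since the proof of Theorem~\ref{teoA} re-invokes that same uniqueness assertion anyway, your version composes into the paper's pipeline equally well. Your regularity bookkeeping ($\mu\in C^1$ gives only a continuous distribution) and your closing remark that the translates $\graf(\mu + c)$ foliate $M\times\mathbb{S}^1$ with $F$ fixing each leaf match the paper's Corollary 4.1 exactly.
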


\begin{proof}
	
	Let $\xi$ be a $DF$-invariant codimension $1$ subbundle transversal to $T\mathbb{S}^1$ and consider a local defining $1$-form $\alpha$ for $\xi$. Take any point $(p_0,t_0)\in M\times\mathbb{S}^1$ and let $U\subset M\times \mathbb{S}^1$ be a neighborhood of $(p_0,t_0)$ such that $\xi|_U = \ker(i_U^\ast\alpha)$, for some $1$-form $\alpha$, where $i_U:U\rightarrow M\times \mathbb{S}^1$ is the inclusion.
	
	
	By Lemma~\ref{lema21}, there exist a sufficiently small open neighborhood $V_{\varepsilon,t_0}$ of $M\times\left\{ t_0\right\}$ such that $$i_{U\cap V_{\varepsilon,t_0}}^\ast\alpha = v_tdt+\beta_t,$$ where $v_t:U\cap V_{\varepsilon,t_0}\rightarrow\sr$ and $\beta_t\in\Omega^\ell_1(M)$ are $C^\ell$ families with parameters in $\mathbb{S}^1$, with $\ell\leq r$.
	
	
	Since $\xi$ is transversal to $T\mathbb{S}^1$ we have either $v_t>0$ or $v_t<0$. Recall that $$\phi_t = t+\gamma \ (\mbox{mod.} \ 1),$$ for some function $\gamma\in C^k(M;\mathbb{R})$. From the hypothesis of $F$-invariance of $\xi$ and by Lemma~\ref{lema3121} we have
	\begin{equation}
	F^\ast\left(\dfrac{\beta_t}{v_t}\right) - \dfrac{\beta_t}{v_t} = -d\gamma,
	\label{eq3333}
	\end{equation}
	with $v_t$ satisfying  $\lambda v_t= v_{t+\gamma}\circ f$, for some $C^\infty$ function $\lambda: M\times\mathbb{S}^1\rightarrow\sr^{+}$  due to~\eqref{peq2} and the fact that $\phi_t' = 1$. In particular, $d\gamma$ is coboundary relative to $F$.
	
	
	It follows that the $1$-form $\eta = dt+\widetilde{\beta}_t$ is $F$-invariant, where $\widetilde{\beta}_t = v_t^{-1}\beta_t$, since $$F^\ast\eta = d(t+\gamma)+F^\ast\widetilde{\beta}_t = dt+d\gamma+\widetilde{\beta}_t-d\gamma = dt+\widetilde{\beta}_t = \eta,$$ as consequence of~\eqref{eq3333}.

	
	Now suppose that $\gamma$ is coboundary relative to $f$ with transfer function $\mu\in C^1(M;\mathbb{S}^1)$. Then  $$d\gamma = f^\ast(d\mu) - d\mu = F^\ast (d\widetilde{\mu}) - d\widetilde{\mu},$$ where $\widetilde{\mu}:M\times \mathbb{S}^1\rightarrow \mathbb{S}^1$ is defined as $\widetilde{\mu}(x,t) = \mu(x)$.
	
	
	Therefore, $\widetilde{\alpha} = dt-d\widetilde{\mu}$ is also an $F$-invariant $1$-form since $$F^\ast\widetilde{\alpha} = d(t+\gamma) - F^\ast(d\widetilde{\mu}) = dt+F^\ast(d\widetilde{\mu})-d\widetilde{\mu}-F^\ast(d\widetilde{\mu}) = dt-d\widetilde{\mu}= \widetilde{\alpha}.$$ We also note that $\widetilde{\alpha}$ is also transversal to $\mathbb{S}^1$ since $\widetilde{\alpha}\left(\dfrac{\partial}{\partial t}\right) = 1$ and consequently $\dfrac{\partial}{\partial t}\notin \ker(\widetilde{\alpha})$. Now $F$ being partially hyperbolic with one dimensional central direction there is only one $DF$-invariant codimension $1$ subbunble transversal to $E^c = T\mathbb{S}^1$ and hence  $\xi|_{\widetilde{U}} = \ker(\eta|_{\widetilde{U}}) =  \ker(i_{\widetilde{U}}^\ast\widetilde{\alpha})$, where we wrote $\widetilde{U}= U\cap V_{\varepsilon,t_0}$, for simplicity.
	
	Let $S_{\mu} = \graf(\mu)\subset M\times \mathbb{S}^1$ be the graph of $\mu$ and $i_{S_{\mu}}:S_{\mu}\rightarrow M\times \mathbb{S}^1$ be the inclusion. Then, we have $$i_{{S_{\mu}}\cap \widetilde{U}}^\ast\widetilde{\alpha} = d\mu -d\widetilde{\mu} = d\widetilde{\mu}- d\widetilde{\mu} = 0,$$ which implies that $\ker(i_{\widetilde{U}}^\ast\widetilde{\alpha})$ is tangent to $S_\mu$.
	
	Since the choice of the point $(p_0,t_0)$ is arbitrary, it follows that for every $(p,t)\in M\times\mathbb{S}^1$ we can choose a neighborhood $U\subset M$ having the property that $\xi|_{\widetilde{U}}$ is tangent $S_\mu\cap \widetilde{U}$, where $\widetilde{U} = U\cap V_{\varepsilon,t}$. This concludes the proof.
	
	
\end{proof}

	\begin{cor}
	The collection of all rotations $\mu+\theta$ of the map $\mu$ along $\mathbb{S}^1$ generates an $F$-invariant foliation $\mathcal{F} = (A_{\theta})_{\theta\in\mathbb{S}^1}$, where $A_\theta = \graf(\mu+\theta)$, and each leaf $A_\theta$ is fixed for $F$. 	
	\end{cor}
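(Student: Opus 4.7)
The plan is to verify the two assertions directly from the coboundary identity $\mu\circ f -\mu = \gamma$ guaranteed by the hypothesis of Proposition~\ref{teo4.1}, and then upgrade the pointwise information to a foliation by exhibiting a global trivializing diffeomorphism.

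First I would check set-theoretic invariance of each $A_\theta$. Pick $(x,t)\in A_\theta$, so that $t=\mu(x)+\theta$. Then
\begin{equation*}
F(x,t) \;=\; (f(x),\,t+\gamma(x)) \;=\; (f(x),\,\mu(x)+\gamma(x)+\theta) \;=\; (f(x),\,\mu(f(x))+\theta),
\end{equation*}
where the last equality uses the coboundary relation $\mu\circ f-\mu=\gamma$. Hence $F(x,t)\in A_\theta$, and by applying the same computation to $F^{-1}$ (which is again a skew-product with base $f^{-1}$ and fiber translation by $-\gamma\circ f^{-1}$, also a coboundary), one gets the reverse inclusion, so $F(A_\theta)=A_\theta$.

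Next I would produce the foliation structure. Define
\begin{equation*}
\Phi:M\times\mathbb{S}^1\longrightarrow M\times\mathbb{S}^1,\qquad \Phi(x,\theta)=(x,\mu(x)+\theta).
\end{equation*}
Since $\mu\in C^1(M;\mathbb{S}^1)$, $\Phi$ is a $C^1$ diffeomorphism with inverse $(x,t)\mapsto(x,t-\mu(x))$, and it maps the trivial horizontal foliation $\{M\times\{\theta\}\}_{\theta\in\mathbb{S}^1}$ onto $\mathcal{F}=\{A_\theta\}_{\theta\in\mathbb{S}^1}$. This shows at once that the $A_\theta$ partition $M\times\mathbb{S}^1$, that each $A_\theta$ is a $C^1$ codimension-$1$ embedded submanifold transversal to $\mathbb{S}^1$, and that the family varies $C^1$-smoothly in $\theta$, hence $\mathcal{F}$ is a $C^1$ codimension-$1$ foliation transversal to the fibers. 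The tangent bundle of $\mathcal{F}$ coincides with the $DF$-invariant subbundle $\xi$ provided by Proposition~\ref{teo4.1} along $A_0=\graf(\mu)$; applying the translations $R_\theta(x,t)=(x,t+\theta)$, which commute with $F$ because the skew-product rotates the fibers uniformly in $t$, propagates tangency to all leaves.

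I do not expect a serious obstacle: the algebraic identity forcing invariance is immediate from the coboundary equation, and the foliation structure is a tautological consequence of the fact that $\Phi$ is a $C^1$ diffeomorphism. The only small point worth being careful about is that $\mu$ takes values in $\mathbb{S}^1$ rather than $\mathbb{R}$, so addition $\mu(x)+\theta$ must be interpreted mod $1$; this is harmless because translations in the fiber are isometries of $\mathbb{S}^1$ and commute with $F$, which is exactly what makes the argument run identically for every $\theta$.
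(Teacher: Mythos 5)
Your proof is correct, and its core coincides with the paper's: the leafwise invariance $F(A_\theta)=A_\theta$ is verified by exactly the same one-line computation from the coboundary identity $\mu\circ f-\mu=\gamma$, and your global shear $\Phi(x,\theta)=(x,\mu(x)+\theta)$ is the paper's foliation chart $\varphi(p_1,\ldots,p_n,\theta)=(\b{x}^{-1}(p_1,\ldots,p_n),\b{t}^{-1}(\theta)+\mu(p))$ assembled globally rather than chart by chart --- a cleaner packaging that yields the partition of $M\times\mathbb{S}^1$, the $C^1$ regularity of the leaves, and transversality to the fibers in one stroke, whereas the paper must separately check that transition maps carry plaques to plaques. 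Where you genuinely diverge is in the tangency of $\xi$ to the leaves (which the paper's proof establishes even though the corollary's statement does not strictly require it): the paper repeats, for each $\theta$, the pullback computation $i^\ast_{S_{\mu_\theta}\cap\widetilde{U}}\widetilde{\alpha}=d\mu_\theta-d\widetilde{\mu}_\theta=0$ with $\widetilde{\alpha}=dt-d\widetilde{\mu}$, while you invoke Proposition~\ref{teo4.1} only on $A_0=\graf(\mu)$ and propagate via the fiber rotations $R_\theta$, which commute with $F$. That propagation is valid but tacitly uses the uniqueness of the $DF$-invariant codimension-$1$ subbundle transversal to $E^c$ to conclude $DR_\theta\xi=\xi$ --- an assertion the paper itself invokes inside Proposition~\ref{teo4.1}, so nothing is lost; alternatively one can see $DR_\theta\xi=\xi$ directly from $R_\theta^\ast(dt-d\widetilde{\mu})=dt-d\widetilde{\mu}$, since $\widetilde{\mu}$ is independent of $t$. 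Two small points in your favor: you are explicit that $\mu(x)+\theta$ is taken mod $1$, and you justify the reverse inclusion $A_\theta\subseteq F(A_\theta)$ via $F^{-1}$, whose fiber translation $-\gamma\circ f^{-1}=\mu\circ f^{-1}-\mu$ is a coboundary relative to $f^{-1}$ --- a step the paper's proof leaves implicit when it passes from $F(A_\theta)\subseteq A_\theta$ to equality.
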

	
	\begin{proof}
	
	Define $\mu_{\theta}:M\rightarrow\mathbb{S}^1$ as $$\mu_{\theta}(p) = {\theta}+\mu(p),$$ for some $\theta\in\mathbb{S}^1$, and $\widetilde{\mu}_{\theta}:M\times \mathbb{S}^1\rightarrow\mathbb{S}^1$ as $\widetilde{\mu}_{\theta}(p,t) = \mu_{\theta}(p)$. Let $S_{\mu_\theta} = \graf(\mu_\theta)\subset M\times \mathbb{S}^1$ be the graph of $\mu_\theta$ and $i_{S_{\mu_\theta}}:S_{\mu_\theta}\rightarrow M\times \mathbb{S}^1$ e the inclusion. Then, we have $$i_{S_{\mu_{\theta}}\cap \widetilde{U}}^\ast\widetilde{\alpha} = d\mu_{\theta} -d\widetilde{\mu}_\theta = d\widetilde{\mu}_\theta- d\widetilde{\mu}_\theta = 0.$$ Hence, $T_{(p,t)}S_{\mu_\theta} = \ker(i^\ast_{\widetilde{U}}\widetilde{\alpha})_{(p,t)}$, for all $(p,t)\in S_{\mu_{\theta}}\cap\widetilde{U}$, which implies $\xi$ is tangent to the graphs of the maps $\mu_{\theta}$, for all $\theta \in\mathbb{S}^1$.
	
	A foliation chart for $\mathcal{F}$ can be easily provided by considering local charts $(U,\b{x})$ for $M$ and $(I,\b{t})$ for $\mathbb{S}^1$, and defining $\varphi: \mathbb{R}^n\times\mathbb{R} \rightarrow U\times I $ as $$\varphi(p_1,\ldots,p_n,\theta) = (\b{x}^{-1}(p_1,\ldots,p_n),\b{t}^{-1}(\theta)+\mu(p)).$$ The image of a plaque $\mathbb{R}^n_{\widetilde{\theta}} =  \mathbb{R}^n\times\left\{\widetilde{\theta} \right\}$ is the set $$\varphi(\mathbb{R}^n_{\widetilde{\theta}}) = \left\{(p,\theta)\in U\times I; \ \theta = \b{t}^{-1}(\widetilde{\theta})+\mu(p)   \right\} = A_{\b{t}^{-1}(\widetilde{\theta})}\cap(U\times I).$$ If $\psi:\mathbb{R}^n\times \mathbb{R}\rightarrow V\times J$ is another foliation chart given by $$\psi(p_1,\ldots,p_n,\theta) = (\b{y}^{-1}(p_1,\ldots,p_n),\b{s}^{-1}(\theta)),$$ with $(U\times I)\cap (V\times J)\neq \emptyset$, then a simple calculation shows that $$\varphi^{-1}\circ \psi(p_1,\ldots,p_n,\theta) = (\b{x}\circ \b{y}^{-1}(p_1,\ldots,p_n),\b{t}(\b{s}^{-1}(\theta)))$$ so that the transition maps are smooth and they map plaques to plaques. 
	
	Now let $F$ be as in Theorem B and consider an arbitrary point $(p,\theta+\mu(p))\in A_{\theta}$. Then $$F(p,\theta+\mu(p)) = (f(p),\theta+\mu(p)+\gamma(p)) = (f(p),\theta+\mu(f(p)))\in A_\theta,$$ since $\gamma(p) = \mu(f(p))-\mu(p)$.	Therefore, $F(A_\theta) = A_\theta$, for all $\theta\in\mathbb{S}^1$.
	
\end{proof}

This proves the ``if'' part in Theorem~\ref{teoB}. To prove the ``only if'' part we will consider the following result in which the converse in Theorem~\ref{teoB} is a particular case.

\begin{prop}
Let $F:M\times\mathbb{S}^1\rightarrow M\times \mathbb{S}^1$ be a skew-product as in Theorem B and suppose $\mathcal{F} = \left\{A_j \right\}_{j\in J}$ is an $F$-invariant continuous foliation of $M\times\mathbb{S}^1$, with leaves that are at least $C^1$ and transversal to the fibers. If for some $j_0\in J$ there is a leaf $A_{j_0}\in\mathcal{F}$, such that $F^k(A_{j_0}) = A_{j_0}$, then $\gamma_k:M\rightarrow \mathbb{S}^1$ given by $$\gamma_k(x) = \sum_{i=0}^{k-1} \gamma(f^{i}(x)),$$ for every $x\in M$, is a coboundary with respect to $f^k$.
\label{propk}  
\end{prop}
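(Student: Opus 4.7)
The plan is to identify the leaf $A_{j_0}$ with the graph of a function $\mu_k:M\to\mathbb{S}^1$ and then read off the coboundary identity directly from the $F^k$-invariance of $A_{j_0}$, mirroring the argument of Proposition~\ref{teo4.1} but applied to the iterate $F^k$.

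First, since $A_{j_0}$ is a $C^1$ submanifold of $M\times\mathbb{S}^1$ everywhere transversal to the $\mathbb{S}^1$-fibers, the projection $\pi_M\colon A_{j_0}\to M$ is a $C^1$ local diffeomorphism, so $A_{j_0}$ is locally the graph of a $C^1$ map over any sufficiently small open set $U\subset M$. The hypothesis $F^k(A_{j_0})=A_{j_0}$ furnishes a canonical propagation rule for such local parametrizations: if $A_{j_0}$ is the graph of $\mu$ on $U$, then
\[
F^k(x,\mu(x))=(f^k(x),\mu(x)+\gamma_k(x))\in A_{j_0},
\]
so $A_{j_0}$ is simultaneously the graph of $y\mapsto \mu(f^{-k}(y))+\gamma_k(f^{-k}(y))$ on $f^k(U)$, and an analogous formula holds on $f^{-k}(U)$. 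Hence $\mu$ extends uniquely and consistently along the full $f^k$-orbit of any base point, by a chain of local graphs.

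Next I would globalize these local graphs to a single continuous map $\mu_k\colon M\to\mathbb{S}^1$ with $A_{j_0}=\graf(\mu_k)$. The key input is that $\mathcal{F}$ foliates \emph{all} of $M\times\mathbb{S}^1$ by leaves transversal to the fibers, so the local product structure of the foliation forces every leaf to meet each $\mathbb{S}^1$-fiber in a discrete set and to be locally a single-valued graph. Combining this local uniqueness with the continuity of $A_{j_0}$ and the density of the $f^k$-orbit of the base point (topological transitivity of the Anosov map $f^k$), the local parametrization extends unambiguously to a continuous, in fact $C^1$, function $\mu_k\colon M\to \mathbb{S}^1$ with $A_{j_0}=\graf(\mu_k)$. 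Once this is established, the $F^k$-invariance of $A_{j_0}$ translates, for every $x\in M$, into
\[
\mu_k(f^k(x))=\mu_k(x)+\gamma_k(x),
\]
that is, $\gamma_k=(f^k)^{\ast}\mu_k-\mu_k$, exhibiting $\gamma_k$ as a coboundary with respect to $f^k$ with transfer map $\mu_k$.

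The main obstacle is the globalization step: \emph{a priori} a codimension-$1$ leaf transversal to an $\mathbb{S}^1$-fibration could project onto $M$ as a nontrivial finite or infinite cover rather than as a graph. To rule this out one must use the full foliation hypothesis together with the Anosov dynamics of $f^k$, and most cleanly this is done by lifting everything to the universal cover $M\times\mathbb{R}$ of $M\times\mathbb{S}^1$ to kill the holonomy of $\mathcal{F}$ around loops in $M$; on the cover each lifted leaf is then a genuine graph, and $F^k$-invariance plus the denseness of $f^k$-orbits prevent the corresponding map $\mu_k$ from having ambiguities when pushed back down to $M$.
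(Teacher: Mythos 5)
Your overall strategy --- realize $A_{j_0}$ as the graph of a single-valued map $\mu_k\colon M\to\mathbb{S}^1$ and read $\gamma_k=\mu_k\circ f^k-\mu_k$ off the invariance --- is genuinely different from the paper's, but it has a real gap exactly at the step you yourself flag, and the fix you propose does not work. A leaf transversal to the $\mathbb{S}^1$-fibers is, via the projection $\pi\colon M\times\mathbb{S}^1\to M$, only a local diffeomorphism onto $M$ (a covering map, by compactness), and this covering may be nontrivial; passing to the fiberwise cover $M\times\mathbb{R}$ does \emph{not} turn leaves into graphs. Concretely, foliate $\mathbb{T}^2=\mathbb{S}^1\times\mathbb{S}^1$ by the parallel $(2,1)$-curves $s\mapsto(2s,s)$: each leaf is transversal to the fibers $\left\{x\right\}\times\mathbb{S}^1$ and meets every fiber in two points, and its lift to $\mathbb{S}^1\times\mathbb{R}$ meets the fiber over $x$ in the infinite coset $\tfrac{x}{2}+\tfrac{1}{2}\mathbb{Z}$, i.e.\ the lifted leaf is an infinite cyclic cover of the base, not a graph. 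Unrolling the $\mathbb{S}^1$-factor kills the deck transformation of the fiber but not the holonomy of the leaf around loops in $M$; excluding such winding must come from the dynamics, and your sketch supplies no argument for that. The appeal to ``density of the $f^k$-orbit of the base point'' is also unjustified: transitivity provides a dense orbit for \emph{some} point, not for your chosen base point (and for a general compact $M$, which is the setting of Theorem~B, transitivity of Anosov diffeomorphisms is itself not known); even along a dense orbit, propagating local graphs by $F^k$ comes with no uniform control guaranteeing that the resulting object is a well-defined continuous --- let alone $C^1$, as you assert without proof --- function.

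The paper's proof bypasses any graph construction. It uses the leaf only at fibers over $f$-periodic base points: if $f^m(x)=x$ and $(x,t)\in A_{j_0}$, then $F^{mk}$ fixes both the leaf and the fiber, which forces
$$\sum_{i=0}^{mk-1}\gamma(f^i(x))=0;$$
regrouping this sum as $\sum_{i=0}^{m-1}\gamma_k(f^{ik}(x))$ shows that the periodic-orbit obstruction for the cocycle $\gamma_k$ over $f^k$ vanishes (using $\Per(f^k)\subset\Per(f)$), and Liv\u{s}ic's Theorem then produces a H\"older transfer map $\mu$ with $\gamma_k=\mu\circ f^k-\mu$. Note that this yields only H\"older regularity, which is all the proposition claims; your $C^1$ assertion is both unsubstantiated and unnecessary. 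If you want to salvage your route, the efficient repair is precisely the paper's: extract the periodic data from the leaf invariance and let Liv\u{s}ic's Theorem do the globalization that your covering-space argument cannot.
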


\begin{proof}

Suppose $F^k(A_{j_0}) = A_{j_0}$, for some $j_0\in J$ and $k\in\mathbb{N}$. Then, for any $x\in \Per(f)$ such that $f^m(x) = x$, we have $F^{mk}(x,t)= (x,t)$, with $(x,t)\in A_{j_0}$. But $$F^{mk}(x,t) = \left(f^{mk}(x), t+\sum_{i=0}^{mk-1}\gamma(f^i(x)) \right),$$ which implies that 
\begin{equation}
\sum_{i=0}^{mk-1}\gamma(f^i(x)) = 0
\label{eq4.1}
\end{equation}
at the $m$-periodic points of $f$. To finish the proof, we show that if~\eqref{eq4.1} holds then $\gamma_k$ is a coboundary with respect to $f^k$.

Note that $$\sum_{i=0}^{mk-1}\gamma(f^i(x)) = \sum_{i=0}^{k-1}\gamma(f^{i}(x))+\sum_{i=0}^{k-1}\gamma(f^{i+k}(x))+\ldots+ \sum_{i=0}^{k-1}\gamma(f^{i+(m-1)k}(x))$$ $$ =  \sum_{i=0}^{m-1}\gamma_k(f^{ik}(x))$$

It follows from~\eqref{eq4.1} that $\sum_{i=0}^{m-1}\gamma_k(f^{ik}(x)) = 0$ at the $m$-periodic points of $f$, for every $m\in \mathbb{N}$, and consequently, for every $m$-periodic points of $f^k$, since $\Per(f^k)\subset \Per(f)$. Therefore, by Liv\u{s}ic's Theorem there is a $\theta$-Hölder continuous map $\mu\in C^\theta(M;\mathbb{S}^1)$ such that $\gamma_k = \mu\circ f^k- \mu$.\\

\end{proof}

When $k=1$, Proposition~\ref{propk} gives precisely the ``only if'' part  in Theorem~\ref{teoB}.

\section{Proof of Theorem~\ref{teoC}}


To prove Theorem~\ref{teoC} we first give a useful characterization of the characteristic vector fields near the hypersurface $S\subset M$ where they are defined and the local defining $1$-forms of the bundle $\xi$ the characteristic vector fields originate from.

\begin{lema}
	Let $S\subset M$ be a hypersurface and $U\subset M^3$ be an open set such that $U\cap S\neq\emptyset$ and $\xi|_U = \ker(\alpha)$, for some $1$-form $\alpha$. Then, a vector field $X_\xi\in TS$ is a generator of the characteristic foliation if and only if $\iota_{X_\xi}\omega = i_{S}^\ast\alpha$ for some area form $\omega$ in $S$.	
	\label{lema32}
\end{lema}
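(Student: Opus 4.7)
My plan is to exploit the fact that on a $2$-dimensional manifold $S$, contraction against any area form $\omega$ gives a $C^\infty(S)$-module isomorphism $\mathfrak{X}(S)\xrightarrow{\sim}\Omega^1(S)$, $X\mapsto \iota_X\omega$. Writing $\beta=i_S^*\alpha$, this reduces the lemma to an algebraic statement about the $1$-form $\beta$ on $S$, observing that $(\xi\cap TS)_p=\ker(\beta)_p$ whenever $\beta_p\neq 0$ and that the singular set of the characteristic foliation is precisely $\{p\in S: \beta_p=0\}=\{p: T_pS\subset \xi_p\}$.

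For the ``if'' direction, I would assume $\iota_{X_\xi}\omega=i_S^*\alpha=\beta$ and simply compute
\[
\alpha(X_\xi)=\beta(X_\xi)=(\iota_{X_\xi}\omega)(X_\xi)=\omega(X_\xi,X_\xi)=0
\]
by skew-symmetry of $\omega$. Since $X_\xi\in TS$ by hypothesis, this places $X_\xi\in \xi\cap TS$ pointwise. At a regular point $p$ one has $\beta_p\neq 0$, so $(\iota_{X_\xi}\omega)_p\neq 0$, and nondegeneracy of $\omega$ forces $X_\xi(p)\neq 0$. Hence $X_\xi$ spans the $1$-dimensional direction $\xi_p\cap T_pS$ at every regular point, which is exactly the condition to represent the characteristic foliation.

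For the converse, I would start with an arbitrary area form $\omega_0$ on $S$ and use the isomorphism above to obtain the unique $Y\in\mathfrak{X}(S)$ with $\iota_Y\omega_0=\beta$. The previous paragraph shows $Y$ is a characteristic vector field, and $Y$ automatically vanishes on the singular set (since $\beta$ does there). If $X_\xi$ is any other characteristic vector field, then on the regular open set $S^{\text{reg}}$ one has $X_\xi=hY$ for a positive function $h$ (by the equivalence class definition of a characteristic vector field given in Section~2). Setting $\omega=h^{-1}\omega_0$ on $S^{\text{reg}}$ yields
\[
\iota_{X_\xi}\omega=\iota_{hY}(h^{-1}\omega_0)=\iota_Y\omega_0=\beta,
\]
as desired.

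The main obstacle I anticipate is verifying that the rescaling $\omega=h^{-1}\omega_0$ extends to a genuine area form on all of $S$, i.e., that $h$ extends smoothly and positively across the singular set of the characteristic foliation. The natural way to handle this is to adopt the convention (implicit in the statement, and consistent with the ``if'' direction, where the formula forces $X_\xi$ to vanish at singular points) that a characteristic vector field vanishes at the singularities with the same order as $\beta$; in that case $h=|X_\xi|/|Y|$ extends continuously and the regularity of $\omega$ follows from the regularity class already being used. If one wants to avoid the extension issue entirely, an alternative is to work locally: around any point one can choose $\omega$ adapted to the local form of $X_\xi$, patch by a partition of unity subordinate to a cover of $S$, and note that the resulting $\omega$ is an area form and satisfies $\iota_{X_\xi}\omega=\beta$ by construction.
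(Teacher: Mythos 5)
Your proof is correct and takes essentially the same route as the paper's: the ``if'' direction via $\alpha(X_\xi)=\omega(X_\xi,X_\xi)=0$, and the converse by contracting an arbitrary area form $\omega_0$ to produce $Y$ with $\iota_Y\omega_0=i_S^\ast\alpha$ and then rescaling the area form by the positive ratio between the two characteristic vector fields. The extension worry you raise in your final paragraph is dissolved by the paper's definition in Section~2, where a characteristic vector field is a representative of an equivalence class modulo multiplication by \emph{globally} defined strictly positive $C^\infty$ functions, so $h$ is smooth and positive on all of $S$ by definition and neither the order-of-vanishing convention nor the partition-of-unity patching is needed.
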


\begin{proof}
	
	Let $X_\xi\in TS$ be a vector field satisfying $\iota_{X_\xi}\omega = i_S^\ast \alpha$ for some area form $\omega$ in $S$. It suffices to show that $X_\xi\in\ker(\alpha)$ since this implies that $X_\xi\in TS\cap\ker(\alpha)$ and hence is a generator of the characteristic foliation. But this is true since $$i_S^\ast \alpha(X_\xi) = \iota_{X_\xi}\omega(X_\xi) = \omega(X_\xi,X_\xi) = 0.$$
	
	
	Conversely, let $X_\xi$ be a vector field generating the characteristic foliation and $\Omega$ be an arbitrary area form in $S$. Then, the vector field $Y$ satifying $\iota_Y\Omega = i_S^\ast \alpha$ is also a generator of the characteristic foliation and thus, there exists a positive $C^\infty$ function $\lambda:M\rightarrow \mathbb{R}^+$ such that $Y = \lambda X_\xi$.
	
	
	Let $\omega = \lambda\Omega$. Since $\lambda>0$, the $2$-form $\omega$ is also an area form $S$ and  $$\iota_{X_\xi}\omega = \lambda \iota_{X_\xi}\Omega = \iota_{\lambda X_\xi} \Omega = \iota_Y\Omega = i_S^\ast \alpha.$$

\end{proof}


Let $(M^2,\omega)$ be a symplectic manifold,  $F:M\times \mathbb{S}^1\rightarrow M\times \mathbb{S}^1$ be a skew-product as in Theorem A, $M_{t_0} = M\times \left\{t_0 \right\}$ and $\xi\subset TM$ a  codimension $1$ subbundle and consider the slice chart formed by local symplectic coordinates $x,y$ in $M$ and the global coordinate $t$ in $\mathbb{S}^1$. Then, by Lemma~\ref{lema21}, there exists an open set $U\subset M\times \mathbb{S}^1$ such that $U\cap M_t\neq 0$ e $\xi|_{U} = \ker{\alpha}$, with $\alpha = v_tdt+\beta_t$ e $\beta_t\in\Omega^k_1(M_t) = \Omega^k_1(M)$.\\



\begin{lema}
	
	If $X_\xi = X_\xi^1\dfrac{\partial}{\partial x}+X^2_\xi\dfrac{\partial}{\partial y}$ is the characteristic vector field of $\xi$ in local symplectic coordinates of $M_t$ associated to the symplectic form $\omega$, then $\beta_t = -X_\xi^2dx+X_\xi^1dy$.
	\label{prop31}
\end{lema}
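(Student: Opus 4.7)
The plan is to apply Lemma~\ref{lema32} with $S = M_t$ and then perform a direct computation in the chosen symplectic coordinates.

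First, I would restrict $\alpha = v_t\,dt + \beta_t$ to the slice $M_t = M\times\{t\}$. Since $t$ is constant along $M_t$, the inclusion $i_{M_t}:M_t\hookrightarrow M\times\mathbb{S}^1$ satisfies $i_{M_t}^\ast dt = 0$, and because $\beta_t\in \Omega_1^\ell(M)$ has no $dt$-component by construction, we simply get $i_{M_t}^\ast\alpha = \beta_t$.

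Next, by Lemma~\ref{lema32}, the characteristic vector field $X_\xi$ of $\xi$ on $S = M_t$ is characterized (up to the choice of area form) by the equation $\iota_{X_\xi}\omega = i_{M_t}^\ast\alpha = \beta_t$. In the local symplectic (Darboux) coordinates $(x,y)$ on $M_t$ we have $\omega = dx\wedge dy$, so a direct computation gives
\begin{equation*}
\iota_{X_\xi}\omega = \iota_{X_\xi}(dx\wedge dy) = dx(X_\xi)\,dy - dy(X_\xi)\,dx = -X_\xi^2\,dx + X_\xi^1\,dy.
\end{equation*}

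Comparing these two expressions for $i_{M_t}^\ast\alpha$ yields $\beta_t = -X_\xi^2\,dx + X_\xi^1\,dy$, as claimed. There is no real obstacle here: the only subtlety is to observe that the area form implicitly chosen by Lemma~\ref{lema32} can be taken to be the symplectic form $\omega$ itself (which is automatic in dimension $2$, where any nondegenerate $2$-form is an area form), so that the characteristic vector field appearing in the statement is precisely the one produced by Lemma~\ref{lema32}. Once this identification is made, the lemma reduces to reading off coefficients in Darboux coordinates.
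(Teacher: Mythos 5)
Your proposal is correct and follows essentially the same route as the paper: restrict $\alpha = v_t\,dt+\beta_t$ to the slice $M_t$ (where $i_{M_t}^\ast dt = 0$, so $i_{M_t}^\ast\alpha = \beta_t$), invoke Lemma~\ref{lema32} with $S = M_t$ and the area form $\omega = dx\wedge dy$, and read off the coefficients of $\iota_{X_\xi}\omega$ in Darboux coordinates. The only cosmetic difference is that the paper evaluates $\iota_{X_\xi}\omega$ on a test vector via a $2\times 2$ determinant, while you use the contraction identity directly; the computations are identical.
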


\begin{proof}
	
	We first observe that $M_t$ is diffeomorphic to $M$ through $\pi_t:M_t\rightarrow M$ obtained by restricting the projection along the fibers $\pi: M\times \mathbb{S}^1\rightarrow M$ to $M_t$. Indeed, the projection along the fibers of the trivial bundle $M\times\mathbb{S}^1$ is the natural projection on the first component and the graph of any smooth map is diffeomorphic to its domain through the natural projection. Since $M$ is a symplectic manifold with symplectic form $\omega$ we pull back this symplectic form to $M_{t}$ with $\pi_t$ obtaining a symplectic form $\omega_t = \pi_t^\ast\omega$ on each $M_t$. Choosing symplectic local coordinates $x,y$ on $M$ and the global coordinate $t$ on $\mathbb{S}^1$ we have $\omega = dx\wedge dy$ and $\pi_t(x,y,t) = (x,y)$. Then, we trivially have $\omega_t = \pi_t^\ast\omega = \omega$.

	
	The symplectic form $\omega = dx\wedge dy$ is an area form in $M_t$ and by Lemma~\ref{lema32}, the characteristic vector field of $\xi$ in $M_t$ associated to $\omega$ satisfies $\iota_{X_\xi}\omega = i^\ast_{M_t}\alpha$. But, fixing $t_0\in \mathbb{S}^1$, we have $$i_{M_{t_0}}^\ast\alpha = d(t_0)+\beta_{t_0} =  \beta_{t_0},$$ for any $t_0\in\mathbb{S}^1$. Then,
	$$(\beta_t)_p(v) = (i_{M_t}^\ast\alpha)_p(v) =  (\iota_{X_\xi}\omega)_p(v) = \det\left(
	\begin{array}{cc}
	X_\xi^1 & X_\xi^2\\
	v_1 & v_2
	\end{array}
	\right)
	=  - X^2_\xi v_1+X_\xi^1 v_2,$$ for any $p\in M$ and $v = v_1\left.\dfrac{\partial}{\partial x}\right|_p+ v_2\left.\dfrac{\partial}{\partial y}\right|_p\in T_pM$. Therefore $\beta_t = -X_\xi^2dx+X_\xi^1dy$.
	
\end{proof}

\vspace{0.5cm}


Thus, in dimension $3$ we have that every subbundle $\xi\subset T(M\times\mathbb{S}^1)$ transversal to $\mathbb{S}^1$ can be locally written as the kernel of a $1$-form given by
\begin{equation}
\alpha = v_tdt -X_2dx+X_1dy,
\label{alpha1}
\end{equation}
where $X_1$ and $X_2$ are the components of the characteristic vector field of $\xi$ in $M_t$. If $v_t\neq 0$ at every point, then we can rescale the characteristic vector field $X_\xi$ by multiplying it by $v_t$, thus obtaining $\alpha = v_t\widetilde{\alpha}$, where
\begin{equation}
\widetilde{\alpha} = dt-X_2dx+X_1dy
\label{alpha2}
\end{equation}
and $\xi =\ker(\alpha) = \ker(\widetilde{\alpha})$. In this case, we can work~\eqref{alpha2} instead of~\eqref{alpha1}.

\vspace{0.7cm}

\begin{proof}[Proof of Theorem C]
	
	
	Let $S\subset M\times\mathbb{S}^1$ be a surface transversal to $\mathbb{S}^1$. By the Implicit Function Theorem there exist an open set $U\subset M$ such that $S\cap\pi^{-1}(U)$ is the graph of a map $h:U\rightarrow\mathbb{S}^1$ and since $\dim(S) = \dim(M)$, it follows that the restriction of the bundle projection $\pi:M\times\mathbb{S}^1\rightarrow M$ to $S$ is a local diffeomorphism.
	
	
	Shrinking $U$ if necessary, let $(U,x,y)$ be a local symplectic chart in $M$ and define local symplectic coordinates and area form in $S\cap \pi^{-1}(U)$ as the ones induced by the pullback of the coordinates $x,y$ and area form $\omega = dx\wedge dy$ by the bundle projection $\pi$. We shall use the same notation for both coordinates and area forms in $S$ and $M$. By a simple calculation, we see that $\pi^\ast\omega = \omega$.

	Since $\gamma$ is a coboundary with transfer map $\mu$, we know by a part of the proof of Theorem B that $\xi$ is locally the kernel of the $1$-form $\alpha = dt-d\mu$. From the defining equation $\iota_{X_\xi}\omega = i_S^\ast\alpha$ of a vector field representing the characteristic foliation of $\xi$ in $S$, we have
	
	$$-X_2dx+X_1dy = \iota_{X_\xi} = i_S^\ast\alpha = \left(\dfrac{\partial h}{\partial x} - \dfrac{\partial \mu}{\partial x}\right) dx+\left(\dfrac{\partial h}{\partial y} - \dfrac{\partial \mu}{\partial y}\right)dy.$$
	and so $X_1 = -\dfrac{\partial(h-\mu)}{\partial y}$ and $X_2 = \dfrac{\partial (h-\mu)}{\partial x}$.

	

	
	Let $g$ be the riemannian metric in $S$ compatible with the symplectic form $\omega$, $\b{J}$ be the almost complex structure in $S$ and $\grad_0(h-\mu) = \dfrac{\partial (h-\mu)}{\partial x}\dfrac{\partial}{\partial x}+\dfrac{\partial (h-\mu)}{\partial y}\dfrac{\partial}{\partial y}$ in the local symplectic coordinates $x,y$, where $\grad_0(h-\mu)$ is the jacobian matrix of $h-\mu$.
	

	Then, recalling that $\grad_0(h) = \b{G}\grad(h)$, we have $$X_\xi|_{S\cap \pi^{-1}(U)} = -\b{J}_0\grad_0(h-\mu) = \b{J}_0^{-1}\b{G}\grad(h-\mu) =$$ $$= \b{J}^{-1}\grad(h-\mu) = -\b{J}\grad(h-\mu),$$ where $\b{J}_0$ and $\b{G}$ are the tensors associated to $\omega$ and $g$, respectively, and we used the fact that if $(\omega,g,\b{J})$ is a compatible triple in $M$ then $\b{G}^{-1}\b{J}_0 = \b{J}$.
	
\end{proof}

\bigskip

\flushleft

{\bf Vanderlei Horita} (vanderlei.horita\@@unesp.br)\\
Departamento de Matem\'{a}tica, IBILCE/UNESP \\
Rua Crist\'{o}v\~{a}o Colombo 2265\\
15054-000 S. J. Rio Preto, SP, Brazil

\bigskip

\flushleft

{\bf Ricardo Chicalé Lemes} (ricardo.chicale\@@gmail.com)\\
Departamento de Matem\'{a}tica, IBILCE/UNESP \\
Rua Crist\'{o}v\~{a}o Colombo 2265\\
15054-000 S. J. Rio Preto, SP, Brazil

\end{document}